\renewcommand{\emptyset}{\varnothing}
\renewcommand{\le}{\leqslant}
\renewcommand{\ge}{\geqslant}
\renewcommand{\leq}{\leqslant}
\renewcommand{\geq}{\geqslant}
\newcommand{\real}{\mathbb{R}}
\newcommand{\ints}{\mathbb{Z}}
\newcommand{\natu}{\mathbb{N}}
\newcommand{\ind}{\mathbf{1}}
\newcommand{\rd}{\,\mathrm{d}}
\newcommand{\ip}{i'}
\newcommand{\aij}{a_{ij}}
\newcommand{\aipj}{a_{i'j}}
\newcommand{\bj}{b_j}
\newcommand{\bsa}{\boldsymbol{a}}
\newcommand{\bsb}{\boldsymbol{b}}
\newcommand{\bsk}{\boldsymbol{k}}
\newcommand{\bsr}{\boldsymbol{r}}
\newcommand{\bsx}{\boldsymbol{x}}
\newcommand{\e}{\mathbb{E}}
\newcommand{\var}{\mathrm{var}}
\newcommand{\bszero}{\boldsymbol{0}}
\newcommand{\modop}[2]{( #1\bmod #2)}
\newcommand{\err}{\varepsilon}
\newcommand{\dunif}{\mathbb{U}}
\newcommand{\simiid}{\stackrel{\mathrm{iid}}\sim}
\newcommand{\bvhk}{\mathrm{BVHK}}
\newtheorem{theorem}{Theorem}
\newtheorem{corollary}{Corollary}
\newtheorem{proposition}{Proposition}
\theoremstyle{definition}
\title{Gain coefficients for scrambled Halton points}
\author{Art B. Owen\\Stanford University
\and Zexin Pan\\Stanford University}
\date{August 2023}
\begin{document}
\maketitle

\begin{abstract}
  Randomized quasi-Monte Carlo, via certain 
  scramblings of digital nets, produces unbiased
  estimates  of $\int_{[0,1]^d}f(\bsx)\rd\bsx$
  with a variance that is $o(1/n)$ for any $f\in L^2[0,1]^d$.  
It also satisfies some non-asymptotic
  bounds where the variance is no larger than 
  some $\Gamma<\infty$ times the ordinary Monte Carlo
  variance.  For scrambled Sobol' points, this quantity
  $\Gamma$ grows exponentially in $d$.  For scrambled
  Faure points, $\Gamma \le \exp(1)\doteq 2.718$
  in any dimension, but those points are awkward to use
  for large $d$.  This paper shows that certain scramblings
  of Halton sequences have gains below an explicit bound that is $O(\log d)$ but not $O( (\log d)^{1-\epsilon})$ for any $\epsilon>0$ as $d\to\infty$. For $6\le d\le 10^6$, the upper bound on the gain coefficient is never larger than $3/2+\log(d/2)$.
\end{abstract}

\section{Introduction}

High dimensional integrals are often computed by plain
Monte Carlo (MC) sampling.  
In its basic form, we sample random vectors IID from their distribution,
evaluate some quantity of interest on the sampled
vectors and average the resulting values.
It is often possible to use a rich set of transformations
from $\dunif[0,1]^d$ (see \cite{devr:1986})
to generate the needed random vectors.  We can then write
the integral of interest as $\mu=\int_{[0,1]^d}f(\bsx)\rd\bsx$
and approximate it via $\hat\mu = (1/n)\sum_{i=0}^{n-1}f(\bsx_i)$
for $\bsx_i\simiid\dunif[0,1]^d$.

In quasi-Monte Carlo (QMC) 
sampling \cite{dick:pill:2010,nied92}, deterministic 
points $\bsx_i\in[0,1]^d$ are chosen
strategically to nearly minimize a measure of distance between the
discrete uniform distribution on $\{\bsx_0,\bsx_1,\dots,\bsx_{n-1}\}$
and the continuous uniform distribution on $[0,1]^d$.
Such distances are known as discrepancies \cite{chen:sriv:trav:2014}.  
The most widely studied
one is the star discrepancy $D_n^*(\bsx_0,\dots,\bsx_{n-1})$
which is a multivariate generalization of the Kolmogorov-Smirnov
distance between discrete and continuous uniform distributions.
It is possible to attain $D_n^* = O(\log(n)^{d-1}/n)$.
Then the Koksma-Hlawka inequality \cite{hick:2014}
yields $|\hat\mu-\mu| = O(n^{-1+\epsilon})$,
for any $\epsilon>0$,  when $f$
has bounded variation in the sense of Hardy and Krause,
which we write as $f\in\bvhk$.

While $\log(n)^{d-1}=O(n^\epsilon)$ for any $\epsilon>0$
it is natural to question whether that is
a good description for large $d$ and modest $n$.
Surprisingly, this expression seems reasonable for
applied work.
Those logarithmic powers apply for adversarially
chosen integrands $f$ that never seem to arise in
practice \cite{schl:2002} and it is challenging to construct
even one such integrand requiring a power of $\log(n)$ above $1$ 
\cite{wherearethelogs}, even  when exploiting
known weaknesses of some QMC constructions.

Some (but not all) randomized QMC (RQMC) methods provide stronger
assurances that high powers of $\log(n)$ do not correspond
to very bad accuracy.
In RQMC, one takes QMC points $\bsa_0,\dots,\bsa_{n-1}$ and
a random transformation $\tau$ such that $\bsx_i=\tau(\bsa_i)\sim\dunif[0,1]^d$
individually, while $\bsx_0,\dots,\bsx_{n-1}$
collectively have low discrepancy.
See \cite{lecu:lemi:2002} and \cite[Chapter 17]{practicalqmc}.
This allows us to get IID replicates $\hat\mu_r$
for $r=1,\dots,R$ that are unbiased for $\mu$
and we can use them to estimate the RQMC sampling variance.

Some RQMC methods give unbiased estimates of $\mu$ with variance
no larger than $\Gamma\sigma^2/n$ for some $\Gamma<\infty$
where $\sigma^2/n$ is the variance of $\hat\mu$ under IID
sampling. This bounds how much the powers of $\log(n)$ 
can make RQMC worse than plain MC which is the natural
default comparison for RQMC.  Also, if $f\in\bvhk$ then
$\var(\hat\mu) =O(n^{-2+\epsilon})$ for any $\epsilon>0$.

We take as our starting point, the nested uniform scrambling
of digital nets from \cite{rtms}.
That method provides an estimate $\hat\mu$
with many desirable properties noted in~\cite{sllnrqmc}.
It is unbiased: if $f\in L^1[0,1]^d$ then $\e(\hat\mu)=\mu$.
There is a strong law of large numbers:
if $f\in L^{1+\epsilon}[0,1]^d$ for some $\epsilon>0$ then
$\Pr( \lim_{n\to\infty} \hat\mu=\mu)=1$.
If $f\in L^2[0,1]^d$ then $\var(\hat\mu)=o(1/n)$.
If $f$ is sufficiently smooth, so that it has mixed partial derivatives
with respect to each input at most once that are in $L^2[0,1]^d$,
then $\var(\hat\mu) = O(n^{-3}(\log n)^{d-1})$.  The property of
most interest here is that if $f\in L^2[0,1]^d$, then there exists
$\Gamma<\infty$ such that $\var(\hat\mu)\le\Gamma\sigma^2/n$.
This quantity $\Gamma$ is called a `gain coefficient'.

The most popular QMC points are the digital nets
and sequences of Sobol' \cite{sobol67}.
They are constructed using dyadic (base $2$) representations
and are designed for sample sizes $n=2^m$.
The properties described above for RQMC can be attained
using either the nested uniform scrambling of \cite{rtms}
or the faster linear scrambling plus digital shift of \cite{mato:1998:2}.
Writing the original Sobol' points $\bsa_i=(a_{i1},\dots,a_{id})\in[0,1]^d$,
and then writing each $a_{ij}$ in terms of bits, the RQMC points
$\bsx_i$ are obtained by taking their bits to be certain randomizations
of the bits of $a_{ij}$.

For the purposes of this paper, the scrambled Sobol' points have
a disadvantage in that the value of $\Gamma$ for them grows
exponentially with dimension $d$.  In high dimensional settings,
an adversary that knew we
were about to use $n=2^m$ scrambled Sobol' points could choose an 
integrand $f\in L^2[0,1]^d$ 
where $\hat\mu$ would have much higher variance than under plain
Monte Carlo.  The worst case integrands are not smooth.
They are piecewise constant functions
over dyadic hyperrectangular subregions
of $[0,1]^d$ and they have rapidly alternating signs.  In many settings we can
be confident that these worst case integrands are extremely
unrealistic. Yet we might want a smaller value of $\Gamma$.

A smaller value of $\Gamma$ can be found by scrambling the
digital nets of Faure \cite{faures}.
While Sobol's points are constructed in base $2$, 
Faure's points are constructed in a more general
integer base $b\ge2$.
Scrambling the points of Faure, 
provides a bound of $\Gamma \le [b/(b-1)]^{d-1}$ in dimension $d$
\cite{owensinum}.
Because his construction requires $b\ge d$ it follows that the
maximal gain cannot exceed $\exp(1) \doteq 2.718$ in any dimension.
Faure's construction requires $b$ to be a prime number,
however it generalizes to the case where $b$ is a power of a prime \cite{nied:1987}.

Unfortunately, the point sets of Faure do not seem to do as well in
practice as those of Sobol'.  This can be explained by the fact that to
get nontrivial equidistribution in $s$-dimensional marginal projections
of $[0,1]^d$ they require at least $b^s$ points to be used.
Because $b\ge d$,  we then need to use $n\ge d^s$ points to
gain an appreciable advantage over plain MC in averaging
the $s$-dimensional interactions in an ANOVA decomposition of $f$.
QMC and RQMC points typically have very uniform $1$ dimensional
marginal projections $\{x_{0j},\dots,x_{n-1,j}\}$ and so the difficulties
with Faure points arise when $d^2$ or $d^3$ would be an uncomfortably
large value for $n$.

There is thus a gap.  How can we get RQMC constructions that converge
faster than those of Faure while having better upper bounds 
on $\Gamma$ than those of Sobol'?  
This article proposes scrambling of Halton points \cite{halt:1960} 
as a solution.  Halton points are less
commonly used than Sobol' points now, 
probably due to experience or beliefs
that Sobol' points provide greater accuracy.
Here, we show that Halton points have gain parameters that grow
at most slowly with dimension. Letting $\Gamma_d$ be the largest gain coefficient in $d$ dimensions, our main theoretical results are upper and lower bounds for $\Gamma_d$.
We easily find that $\Gamma_1=1$ and our bounds imply that 
\begin{align}\label{eq:mainbounds}
\frac34\prod_{j=1}^d\frac{b_j+1}{b_j}\le\Gamma_d \le \frac12\prod_{j=1}^d\frac{b_j}{b_j-1}
\end{align}
both hold for all $d\ge2$.
Using~\eqref{eq:mainbounds} we show that $\Gamma_d=O( \log d)$.
We also show that $\Gamma_d$ cannot be $O((\log d)^{1-\epsilon})$ for any $\epsilon>0$.
The bounds in~\eqref{eq:mainbounds} are shown in Figure~\ref{fig:bounds}. For $6\le d\le10^6$, the upper bound on $\Gamma_d$ never exceeds $3/2+\log(d/2)$, though that may fail to hold for some $d>10^6$.

\begin{figure}
\centering
\includegraphics[width=.9\hsize]{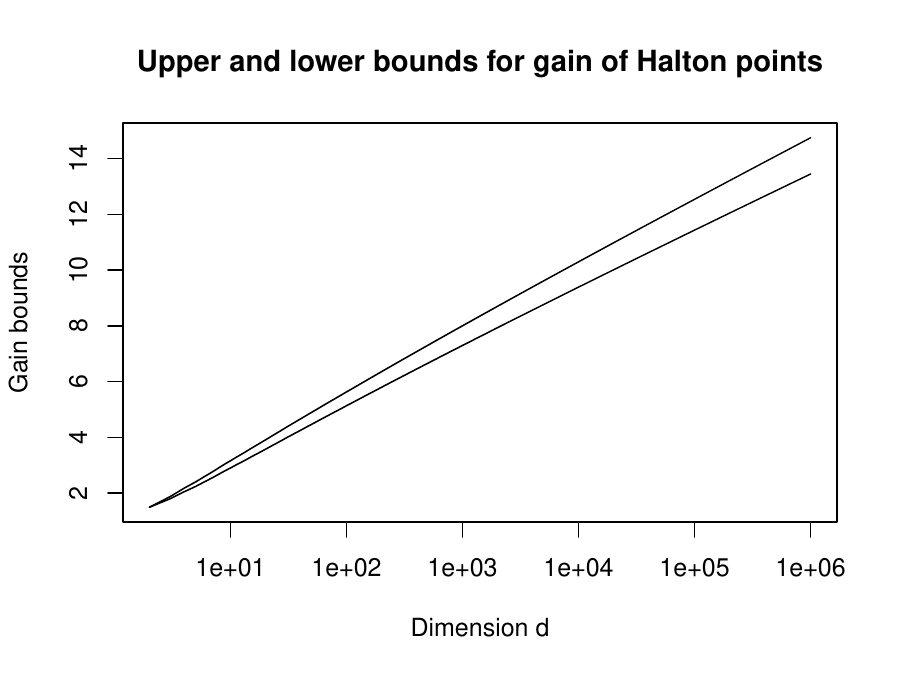}
\caption{\label{fig:bounds}
This figure shows the upper and lower bounds for $\Gamma_d$ from equation~\eqref{eq:mainbounds}. The horizontal axis is the dimension $d$ for $2\le d\le10^6$.
}
\end{figure}

This logarithmic rate for $\Gamma_d$ is much slower than the exponential
rate that Sobol' points have.  We might then prefer to use scrambled
Halton points in settings where we very much want to avoid
the worst outcomes even if it means less accuracy on benign cases.
Halton points are also easier to use than Faure points when $d$ is large.
If we rank the RQMC methods by worst case variances we
prefer Faure to Halton to Sobol'.  In high dimensional settings with
non-pathological integrands we might reasonably prefer the reverse
order.  Then Halton, coming second both times, may be a good
compromise choice.

The rest of this paper is organized as follows.
Section~\ref{sec:background} introduces some notation,
defines the Halton points and introduces gain coefficients
for all non-empty subsets of $s\le d$ variables and all vectors of 
$s$ nonnegative integers.
Section~\ref{sec:nonasymptotic} gives some expressions for gain
coefficients at special sample sizes $n$.
It also shows that the gain coefficients are $O(1/n)$ from which the scrambled Halton variance is $o(1/n)$ for any integrand in $L^2[0,1]^d$.
Section~\ref{sec:examples} has numerical examples to illustrate
how gain coefficients vary with $n$.
Section~\ref{sec:gaininequalities} has two theorems that identify precisely where the worst gain coefficients must lie and then establishes the upper bound in~\eqref{eq:mainbounds}.
Section~\ref{sec:lower} establishes the lower bound in~\eqref{eq:mainbounds}.
Section~\ref{sec:conclusions} has brief conclusions.

\section{Background material}\label{sec:background}

\subsection{Basic notation}
We use $\real$ for the real numbers,
$\ints$ for the integers, $\natu$ for the positive
integers, $\natu_0=\natu\cup\{0\}$
and $\ints_m=\{0,1,\dots,m-1\}$
for $m\in\natu$. 
We use $1{:}d$ to denote $\{1,2,\dots,d\}$.

For $u\subseteq1{:}d$, we use $|u|$
for the  cardinality of $u$ and $-u$
for the complementary set $1{:}d\setminus u$. 
A vector of zeros is denoted by $\bszero$.
If $u=\{j_1,j_2,\dots,j_{|u|}\}$ then
we use $\natu_0^u$
to denote a copy of $\natu_0^{|u|}$
that can be indexed by the elements of $u$.
For example, from any $\bsk\in\natu_0^{\{1,2,4\}}$
we can obtain components $k_1$, $k_2$ and $k_4$.

For $z\in\real$, we let $\lfloor z\rfloor=\max\{y\in\ints\mid y\le z\}$.
For $a\in\natu_0$ and $b\in \natu$ the residue of $a$ modulo $b$
is $a-\lfloor a/b\rfloor b$ which we denote by $\modop{a}{b}$.

The expressions $\ind_A$ and $\ind\{A\}$ are both
indicators, taking the value $1$ when $A$ holds and
$0$ when $A$ does not hold.  The choice of which
to use is made based on readability. 

\subsection{Halton points}
Let $b_j$ be the $j$'th largest prime number for $j\in\natu$.
The base $b_j$ digits of $i\in\natu_0$ are denoted $a_{ij\ell}$.
That is, for $i\in\natu_0$, and $j\in\natu$, we can write
$$
i = \sum_{\ell=1}^\infty a_{ij\ell}b_j^{\ell-1}
$$
for $a_{ij\ell}\in \ints_{b_j}$.  This sum has only
finitely many nonzero terms for any $i\in\natu_0$.
The unscrambled Halton points are $\bsa_i\in[0,1)^d$ for $i\in\natu$ with
\begin{align}\label{eq:defaij}
a_{ij} = \sum_{\ell=1}^\infty a_{ij\ell}b_j^{-\ell}
\end{align}
for $j\in1{:}d$.  Halton points can be defined by taking $b_j$ to
be any $d$ relatively prime natural numbers. In practice, the
first $d$ primes are almost always used and we will work with
that assumption.

Here is a brief intuitive description of why Halton points
fill the unit cube nearly uniformly. For more details see \cite{halt:1960}.
For $j=1$, as integers $i$ alternate between even and odd,
the first digit $a_{i11}$ alternates between $0$ and $1$
and then the point $a_{i1}$ alternates between being
in $[0,1/2)$ and $[1/2,1)$ so we always have nearly
half of the points in $[0,1/2)$ and half in $[1/2,1)$.
More generally, any consecutive $2^k$ integers $i$
contain all values of in $\ints_{2^k}$ and then
the corresponding $a_{i1}$ will be balanced
over $[r/2^k,(r+1)/2^k)$ for $r\in\ints_{2^r}$.
Still more generally, for $j\ge1$ and
any $b_j^{k_j}$ consecutive indices $i\in\natu_0$,
the values $a_{ij}$  stratify over 
$[r/b_j^{k_j},(r+1)/b_j^{k_j})$ for $r\in\ints_{b_j^{k_j}}$.
For $\bsk\in\natu_0^d$ we can
consider the Halton strata
\begin{align}\label{eq:haltonstrata}
S_{\bsr}(\bsk)=\prod_{j=1}^d\Bigl[ \frac{r_j}{b_j^{k_j}}, \frac{r_j+1}{b_j^{k_j}}\Bigr)
\end{align}
with $r_j\in\ints_{b_j^{k_j}}$.
By the Chinese remainder theorem,
every consecutive batch of $\prod_{j=1}^db_j^{k_j}$ points
has exactly one member in each of the strata above.  
Any subsequent batch of fewer than $\prod_{j=1}^db_j^{k_j}$
points
is spread through those strata, with at most one of them 
in each stratum.
Smaller bases $b_j$ tend to provide better
equidistribution properties than larger bases do.
As a result, when using Halton points, it can be very
valuable to arrange for the most important input
variables to have the lowest indices.  A perfect
definition of variable importance would be tautological
and not very helpful.
In practice, one can use scientific understanding/intuition
or proxy measures such as Sobol' indices
\cite{dave:gamb:ioos:prie:2021} to order the inputs.


While Halton points are asymptotically equidistributed,
it is well known that for small $n$ and large $d$,
the points tend to show unwanted structure.
For $i<100$, $a_{i,26}=(i\bmod 101)/101$ and 
$a_{i,27}=(i\bmod 103)/103$ are collinear.
There have been many proposals to break up this
unwanted structure by, for example, replacing
$a_{ij\ell}$ in~\eqref{eq:defaij} by some permuted values
$\pi(a_{ij\ell})$ where $\pi(\cdot)$ can depend on 
$j$ and $\ell$.
There are deterministic proposals
in \cite{braa:well:1979}, \cite{faur:1992} and \cite{vand:cool:2006}
and others described in \cite{faur:lemi:2009}.
There is a random permutation proposal in~\cite{okte:shah:gonc:2012}
with a study and implementation in \cite{rhalton} and
another kind of randomization in \cite{wang:hick:2000}.

Here we consider two randomizations.  One is the nested uniform
scramble \cite{rtms} in base $b_j$ applied to the $j$'th component
of $\bsa_i$ with all $d$ randomizations statistically independent of each other. The other is the random linear scramble, with digital shift,
from \cite{mato:1998:2}.
Faure and Lemieux \cite{faur:lemi:2009} have considered the
linear scramble, without a digital shift, for Halton points.
They did not use random scrambles but instead did a computer
search to find a scramble to recommend for general use.

\subsection{Gain coefficients}
Digital nets are similar to Halton points, except that they use
the same base $b$ for every component of the $n$ points.
Gain coefficients for scrambled digital nets
were presented in \cite{owensinum}.
They arise from a $d$-fold tensor product 
of a base $b$ Haar wavelet basis for $L^2[0,1]$.
For Halton points, we use instead a tensor product
of Haar wavelet basis functions with the $j$'th
one defined in terms of base $b_j$.
For non-empty $u\subseteq1{:}d$, $\bsk\in\natu_0^u$ and
integer $n\ge1$, define the gain coefficient
\begin{align}\label{eq:defgain}
\begin{split}
G_{u,\bsk}(n)&=
\frac1n\prod_{j\in u}(\bj-1)^{-1}\widetilde G_{u,\bsk}(n),
\quad\text{where}\\
\widetilde G_{u,\bsk}(n) &= \sum_{i=0}^{n-1}\sum_{\ip=0}^{n-1}
\prod_{j\in u}b_j
\ind_{\lfloor b_j^{k_j+1} \aij\rfloor=\lfloor b_j^{k_j+1}\aipj\rfloor}
-\ind_{\lfloor b_j^{k_j}\aij\rfloor=\lfloor b_j^{k_j}\aipj\rfloor}.
\end{split}
\end{align}
This formula is a generalization of the one in \cite[Theorem 2]{owensinum}
that uses the same base $b$ in every dimension. These gain coefficients apply to scrambling of arbitrary point sets, though they have useful simplifications for some quasi-Monte Carlo points.

Each $f\in L^2[0,1]^d$ has variance components
$\sigma^2_{u,\bsk}$ defined through the wavelet basis.
The variance $\sigma^2$ of $f$ satisfies
$$
\sigma^2 = \sum_{u\subseteq 1:d}\sum_{\bsk\in\natu_0^u}\sigma^2_{u,\bsk}.
$$
We take $\sigma^2_{\emptyset,()}=0$ because it corresponds to a
constant term which does not contribute to the sampling variance.
If we use $n\ge1$ randomized Halton points then the estimate
$$
\hat\mu_n=\frac1n\sum_{i=0}^{n-1}f(\bsx_i)
$$
is an unbiased estimate of $\mu=\int_{[0,1]^d}f(\bsx)\rd\bsx$
with variance
$$
\frac1n\sum_{u\subseteq1{:}d}\sum_{\bsk\in\natu_0^u}G_{u,\bsk}(n)\sigma^2_{u,\bsk}
\le \frac{\Gamma_d(n)\sigma^2}{n}
$$
where
$$
\Gamma_d(n) = \max_{u\subseteq1:d}\sup_{\bsk\in\natu_0^u}G_{u,\bsk}(n).
$$
Estimation using scrambled Halton points cannot 
have more than $\Gamma_d(n)$
times the variance from using plain Monte Carlo points.
It is then interesting to bound $\Gamma_d(n)$.
We will also get a bound for
$$
\Gamma_d = \sup_{n\in\natu}\Gamma_d(n).
$$


\subsection{Preliminary results}

Here we present some elementary results 
to simplify some of the derivations 
for gain coefficients.
We begin by defining two quantities that frequently
arise in our expressions.
For non-empty $u\subseteq1{:}d$ and any $v\subseteq u$, let
\begin{align}\label{eq:defhu}
H_{u,v}= \prod_{j\in v}b_j\prod_{j\in u-v}(-1).
\end{align}
Then for $\bsk\in\natu_0^{u}$  define
\begin{align}\label{eq:defmuvk}
  m_{u,v,\bsk} = \prod_{j\in v}b_j^{k_j+1}\prod_{j\in u-v}b_j^{k_j}.
\end{align}

By inclusion-exclusion, we may write
\begin{align*}
&\phantom{=}\,\,\prod_{j\in u}
\Bigl(b_j
\ind_{\lfloor b_j^{k_j+1}\aij\rfloor=\lfloor b_j^{k_j+1}\aipj\rfloor}
-\ind_{\lfloor b_j^{k_j}\aij\rfloor=\lfloor b_j^{k_j}\aipj\rfloor}\Bigr)\\
&=
\sum_{v\subseteq u}
H_{u,v}
\prod_{j\in v}
\ind_{\lfloor b_j^{k_j+1}\aij\rfloor =\lfloor b_j^{k_j+1}\aipj\rfloor}
\prod_{j\in u-v}\ind_{\lfloor b_j^{k_j}\aij\rfloor =\lfloor b_j^{k_j}\aipj\rfloor}.
\end{align*}
For $\aij$ given by~\eqref{eq:defaij} and $r\ge0$,
$$\lfloor b_j^r\aij\rfloor 
= \bigg\lfloor\sum_{\ell=1}^\infty b_j^{r-\ell}a_{ij\ell}\bigg\rfloor
= \sum_{\ell=1}^r b_j^{r-\ell}a_{ij\ell}.$$
Therefore
$\lfloor b_j^r\aij\rfloor=\lfloor b_j^r\aipj\rfloor$
if and only if
$$\sum_{\ell=1}^r b_j^{r-\ell}a_{ij\ell}=\sum_{\ell=1}^r b_j^{r-\ell}a_{\ip j\ell}$$
which holds if and only
$i = \ip \bmod b_j^r$.
Then using the Chinese remainder theorem
\begin{align}\label{eq:crt}
  &\phantom{=}\,\,
\prod_{j\in v}\ind_{\lfloor b_j^{k_j+1}\aij\rfloor =\lfloor b_j^{k_j+1}\aipj\rfloor}
\prod_{j\in u-v}\ind_{\lfloor b_j^{k_j}\aij\rfloor =\lfloor b_j^{k_j}\aipj\rfloor}\notag\\
  &=
\prod_{j\in v}\ind\{i=i'\bmod b_j^{k_j+1}\}
\prod_{j\in u-v}\ind\{i=i'\bmod b_j^{k_j}\}\notag\\
  &=
\ind\{i=i'\bmod m_{u,v,\bsk}\}.
\end{align}

For $m,n\in\natu$ let
\begin{align}\label{eq:defcmn}
C_{m,n}=\sum_{i=0}^{n-1}\sum_{\ip=0}^{n-1}\ind\{i=\ip\bmod m\}.
\end{align}
Then the unnormalized gain coefficients from~\eqref{eq:defgain} satisfy
\begin{align}\label{eq:GinHC}
    \widetilde G_{u,\bsk}(n) = \sum_{v\subseteq u}H_{u,v}C_{m_{u,v,\bsk},n}
= \sum_{v\subseteq u}H_{u,v}C_{u,v,\bsk}(n)
\end{align}
where $C_{u,v,\bsk}(n)$ is a more readable replacement
for $C_{m_{u,v,\bsk},n}$.

\begin{proposition}\label{prop:cmn}
  For $m,n\in\natu$,
\begin{align}\label{eq:cmn}
C_{m,n} = n +(2n-m)\lfloor n/m\rfloor -m\lfloor n/m\rfloor^2.
\end{align}
\end{proposition}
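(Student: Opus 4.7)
The plan is to count pairs $(i,i')$ with $i\equiv i'\pmod m$ by grouping them according to their common residue class modulo $m$.

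First I would write $n = qm + r$ where $q = \lfloor n/m\rfloor$ and $r = n - qm \in \{0,1,\dots,m-1\}$. For each residue $c\in\{0,1,\dots,m-1\}$, let $N_c = \#\{i\in\ints_n : i\equiv c\pmod m\}$. A standard count shows that $N_c = q+1$ for $c < r$ and $N_c = q$ for $c\ge r$, since the residues $0,1,\dots,r-1$ each pick up one extra element from the incomplete final block of length $r$. Because the indicator in~\eqref{eq:defcmn} factors over residue classes,
\begin{equation*}
C_{m,n} = \sum_{c=0}^{m-1} N_c^2 = r(q+1)^2 + (m-r)q^2.
\end{equation*}

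Next I would simplify. Expanding gives $C_{m,n} = mq^2 + r(2q+1)$. Substituting $r = n - qm$ yields
\begin{equation*}
C_{m,n} = mq^2 + (n - qm)(2q+1) = n + q(2n - m) - mq^2,
\end{equation*}
which is exactly~\eqref{eq:cmn} once we restore $q = \lfloor n/m\rfloor$.

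There is no real obstacle here; the whole proof is a one-line counting argument followed by routine algebra. The only thing to be slightly careful about is the case $m > n$, where $q = 0$ and $r = n$, but then the formula collapses to $C_{m,n} = n$, which matches the trivial observation that in this regime the only pairs satisfying $i\equiv i'\pmod m$ with $i,i'\in\ints_n$ are the diagonal pairs $i = i'$.
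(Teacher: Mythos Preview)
Your proof is correct and essentially the same as the paper's: both write $n=qm+r$, establish the intermediate count $C_{m,n}=mq^2+(2q+1)r$, and then substitute $r=n-qm$. The only cosmetic difference is that you obtain $mq^2+(2q+1)r$ by grouping pairs according to their common residue class and summing $N_c^2$, whereas the paper splits the index square $\{0,\dots,n-1\}^2$ into the full block $[0,mq)^2$, the two off-diagonal rectangles, and the remainder block; both routes are equally direct.
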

\begin{proof}
Write $n = mq+r$ for quotient $q=\lfloor n/m\rfloor\in\natu_0$ and remainder $r\in\ints_m$.
Then as explained below,
\begin{align*}
C_{m,n} 
&= mq^2+(2q+1)r\\
&= mq^2+(2q+1)(n-mq)\\
&= m\lfloor n/m\rfloor^2+(2\lfloor n/m\rfloor+1)(n-m\lfloor n/m\rfloor)\\
&= n +(2n-m)\lfloor n/m\rfloor -m\lfloor n/m\rfloor^2.
\end{align*}
The $mq^2$ term comes from $\sum_{i=0}^{mq-1}\sum_{i'=0}^{mq-1}\ind\{i=i'\bmod m\}$.
We get $qr$ from
$\sum_{i=0}^{mq-1}\sum_{i'=mq}^{mq+a-1}\ind\{i=i'\bmod m\}$
and another $qr$ with the indices $i$ and $i'$ reversed.
Finally, $\sum_{i=mq}^{mq+r-1}\sum_{i'=mq}^{mq+r-1}\ind\{i=i'\bmod m\}=r$.
\end{proof}

We may write the fractional part
  $\lfloor n/m\rfloor$ arising in $C_{m,n}$
  by $n/m-\err$ for some $0\le \err\le 1$,
  for each $m=m_{u,v,\bsk}$.
Doing this we get
\begin{align}\label{eq:thatotherapproach}
C_{u,v,\bsk}(n) = \frac{n^2}{m_{u,v,\bsk}}+m_{u,v,\bsk}\err_v(1-\err_v)
\end{align}
where $0\le\err_v\le1$,
which we will use later.
\section{Non-asymptotic results}\label{sec:nonasymptotic}

Here we show some non-asymptotic properties of the gain coefficients.  We also show that for scrambled Halton points $\var(\hat\mu)=o(1/n)$ when $f\in L^2[0,1]^d$.

Let $\underbar{m}_{u,\bsk}=m_{u,\emptyset,\bsk}$
and $\overline{m}_{u,\bsk}=m_{u,u,\bsk}$.
These are the minimal and maximal values of $m_{u,v,\bsk}$,
respectively.
We assume throughout that $u\ne\emptyset$.

\begin{proposition}\label{prop:gainone}
  If $1\le n< \underline{m}_{u,\bsk}$ then
  $$
G_{u,\bsk}(n) = 1.
  $$
  \end{proposition}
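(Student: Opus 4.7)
The plan is to observe that when $n < \underline{m}_{u,\bsk}$, every modulus $m_{u,v,\bsk}$ appearing in~\eqref{eq:GinHC} strictly exceeds $n$, so each $C_{u,v,\bsk}(n)$ collapses to the same value $n$. Once this happens, the $v$-dependence disappears from the sum and the whole expression factors, leaving only the elementary identity $\sum_{v\subseteq u}H_{u,v}=\prod_{j\in u}(b_j-1)$ to cancel the normalizing prefactor in~\eqref{eq:defgain}.

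First I would note that $m_{u,v,\bsk}=\prod_{j\in v}b_j^{k_j+1}\prod_{j\in u-v}b_j^{k_j}$ is minimized over $v\subseteq u$ by $v=\emptyset$, since each $b_j\ge 2>1$, so $\underline{m}_{u,\bsk}\le m_{u,v,\bsk}$ for every $v\subseteq u$. Hence the hypothesis $n<\underline{m}_{u,\bsk}$ gives $n<m_{u,v,\bsk}$ for every $v$, which means $\lfloor n/m_{u,v,\bsk}\rfloor=0$. Applying Proposition~\ref{prop:cmn}, formula~\eqref{eq:cmn} immediately yields $C_{u,v,\bsk}(n)=n$ for every $v\subseteq u$.

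Next, substituting the common value $C_{u,v,\bsk}(n)=n$ into~\eqref{eq:GinHC} gives
$$
\widetilde G_{u,\bsk}(n)=n\sum_{v\subseteq u}H_{u,v}
=n\sum_{v\subseteq u}\prod_{j\in v}b_j\prod_{j\in u-v}(-1)
=n\prod_{j\in u}(b_j-1),
$$
where the last step is the standard subset expansion of a product of binomials. Plugging this into the definition~\eqref{eq:defgain} of $G_{u,\bsk}(n)$, the factors $n$ and $\prod_{j\in u}(b_j-1)$ cancel exactly against the normalizing $1/n$ and $\prod_{j\in u}(b_j-1)^{-1}$, yielding $G_{u,\bsk}(n)=1$.

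There is no real obstacle here: the proof is a direct computation combining Proposition~\ref{prop:cmn} with the factorization~\eqref{eq:GinHC}. The only point worth checking carefully is the monotonicity $\underline{m}_{u,\bsk}\le m_{u,v,\bsk}$, which ensures that the threshold $n<\underline{m}_{u,\bsk}$ is strong enough to force the ``diagonal only'' regime of $C_{m,n}$ uniformly across all $v\subseteq u$.
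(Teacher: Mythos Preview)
Your proof is correct and follows essentially the same approach as the paper: both use Proposition~\ref{prop:cmn} to get $C_{u,v,\bsk}(n)=n$ when $n<\underline{m}_{u,\bsk}$, then factor $\sum_{v\subseteq u}H_{u,v}=\prod_{j\in u}(b_j-1)$ to cancel the normalization. You are merely slightly more explicit than the paper about why $\underline{m}_{u,\bsk}\le m_{u,v,\bsk}$ for all $v$.
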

\begin{proof}
If $n<\underbar{m}_{u,\bsk}=m_{u,\emptyset,\bsk}$,
then $\lfloor n/m_{u,v,\bsk}\rfloor=0$ and
from~\eqref{eq:cmn}, $C_{u,v,\bsk}(n) = n.$
In this case
\begin{align*}
\widetilde G_{u,\bsk}=\sum_{v\subseteq u}H_{u,v}C_{u,v,\bsk}(n)
&=n\sum_{v\subseteq u}\prod_{j\in v}b_j\prod_{j\in u-v}(-1)
=n\prod_{j\in u}(b_j-1).
\end{align*}
Therefore $G_{u,\bsk}=1$, because the gain coefficients
in~\eqref{eq:defgain}
are defined with a normalizing factor of $\prod_{j\in u}(b_j-1)^{-1}/n$.
\end{proof}

\begin{proposition}\label{prop:gainzero}
  If $n=r\overline{m}_{u,\bsk}$ for $r\in\natu$, then
  $$
G_{u,\bsk}(n) = 0.
  $$
  \end{proposition}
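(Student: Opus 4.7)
The plan is to exploit the representation in~\eqref{eq:thatotherapproach}, namely
$$
C_{u,v,\bsk}(n) = \frac{n^2}{m_{u,v,\bsk}} + m_{u,v,\bsk}\,\err_v(1-\err_v),
$$
where $\err_v$ is the fractional part of $n/m_{u,v,\bsk}$. The first step is the divisibility observation: for every $v \subseteq u$,
$$
\frac{\overline{m}_{u,\bsk}}{m_{u,v,\bsk}} = \prod_{j\in u-v} b_j \in \natu,
$$
so $m_{u,v,\bsk}$ divides $\overline{m}_{u,\bsk}$, which divides $n = r\overline{m}_{u,\bsk}$. Therefore $\err_v = 0$ simultaneously for all $v \subseteq u$, and the error term vanishes uniformly.

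The second step is then to substitute $C_{u,v,\bsk}(n) = n^2/m_{u,v,\bsk}$ into~\eqref{eq:GinHC} and show the resulting sum is zero:
$$
\widetilde G_{u,\bsk}(n) = n^2 \sum_{v\subseteq u} \frac{H_{u,v}}{m_{u,v,\bsk}}.
$$
Using the definitions~\eqref{eq:defhu} and~\eqref{eq:defmuvk}, one sees that
$$
\frac{H_{u,v}}{m_{u,v,\bsk}} = \prod_{j\in v} \frac{1}{b_j^{k_j}} \prod_{j\in u-v}\frac{-1}{b_j^{k_j}},
$$
a term that factors neatly across $j \in u$. Summing over $v \subseteq u$ then collapses to a product,
$$
\sum_{v\subseteq u}\frac{H_{u,v}}{m_{u,v,\bsk}} = \prod_{j\in u}\Bigl(\frac{1}{b_j^{k_j}} - \frac{1}{b_j^{k_j}}\Bigr) = 0,
$$
since $u \ne \emptyset$. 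Hence $\widetilde G_{u,\bsk}(n) = 0$ and so $G_{u,\bsk}(n) = 0$.

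There is no real obstacle here: the proof is essentially a divisibility remark followed by a standard inclusion-exclusion style factorization. The only thing one must be a little careful about is verifying that the quantity $H_{u,v}/m_{u,v,\bsk}$ truly separates as a product over coordinates $j \in u$ of a term depending only on whether $j \in v$, which is what allows the sum over $v$ to turn into the vanishing product above.
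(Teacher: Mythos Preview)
Your proof is correct and follows essentially the same route as the paper's: both reduce to $C_{u,v,\bsk}(n)=n^2/m_{u,v,\bsk}$ via divisibility and then show $\sum_{v\subseteq u}H_{u,v}/m_{u,v,\bsk}=0$. The only cosmetic difference is that you invoke~\eqref{eq:thatotherapproach} with $\err_v=0$ and factor the sum as a product of zeros, while the paper computes $C_{u,v,\bsk}(n)$ directly from~\eqref{eq:cmn} and writes the vanishing sum as $m_{u,\emptyset,\bsk}^{-1}\sum_{v\subseteq u}(-1)^{|u-v|}$.
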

\begin{proof}
If $n=r\overline{m}_{u,\bsk}$, for $r\in\natu_0$, then
for all $v\subseteq u$,
\begin{align*}
C_{u,v,\bsk}(n) &= n+(2n-m_{u,v,\bsk})(n/m_{u,v,\bsk})-m_{u,v,\bsk}(n/m_{u,v,\bsk})^2\\
&=n^2/m_{u,v,\bsk}.
\end{align*}
Now
\begin{align*}
\sum_{v\subseteq u}H_{u,v}\frac{n^2}{m_{u,v,\bsk}}
&=n^2\sum_{v\subseteq u}
\Biggl[\,\prod_{j\in v}b_j\prod_{j\in u-v}(-1)\Biggr]
\prod_{j\in u}b_j^{-k_j}\prod_{j\in v}b_j^{-1}\\
&=\frac{n^2}{m_{u,\emptyset,\bsk}}\sum_{v\subseteq u}
(-1)^{|u-v|}=0
\end{align*}
and so $G_{u,\bsk}(n)=0$ by equation~\eqref{eq:GinHC}.
\end{proof}

A gain of zero is the expected result.
For such $n$ we have attained zero discrepancy 
for all of the Halton strata congruent to $\prod_{j\in u}[0,1/b_j^{k_j+1})\prod_{j\in -u}[0,1)$.
There are $\overline{m}_{u,\bsk}$ such
strata defined by $u$ and $\bsk$, and
so $G_{u,\bsk}(n)$ cannot be zero for $n<\overline{m}_{u,\bsk}$.
Next we show that $G_{u,\bsk}(n)$ cannot re-attain its
maximal value for any $n>\overline{m}_{u,\bsk}$.

\begin{proposition}\label{prop:notagain}
Let $n = q\overline{m}_{u,\bsk}+r$ for 
$q\in\natu$ and
$r\in\ints_{\overline{m}_{u,\bsk}}\setminus\{0\}$.
Then
\begin{align}\label{eq:notagain}
G_{u,\bsk}(n) = \frac{r}{n}G_{u,\bsk}(r).
\end{align}
\end{proposition}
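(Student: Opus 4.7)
The plan is to reduce the claim to showing $\widetilde G_{u,\bsk}(n)=\widetilde G_{u,\bsk}(r)$. From the definition~\eqref{eq:defgain}, $G_{u,\bsk}(n)=(1/n)\prod_{j\in u}(b_j-1)^{-1}\widetilde G_{u,\bsk}(n)$, so~\eqref{eq:notagain} is equivalent to $\widetilde G_{u,\bsk}(n)=\widetilde G_{u,\bsk}(r)$. By~\eqref{eq:GinHC} it suffices to prove
\begin{equation*}
\sum_{v\subseteq u}H_{u,v}\bigl[C_{u,v,\bsk}(n)-C_{u,v,\bsk}(r)\bigr]=0.
\end{equation*}

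Next I would exploit the key divisibility $m_{u,v,\bsk}\mid \overline{m}_{u,\bsk}$, which holds because $\overline{m}_{u,\bsk}/m_{u,v,\bsk}=\prod_{j\in u-v}b_j$ is a positive integer. Writing $m=m_{u,v,\bsk}$ and $t=\overline{m}_{u,\bsk}/m\in\natu$, the hypothesis $n=q\overline{m}_{u,\bsk}+r$ gives $n=qmt+r$, so $\lfloor n/m\rfloor=qt+\lfloor r/m\rfloor$. Substituting into Proposition~\ref{prop:cmn} and subtracting $C_{m,r}$, a direct expansion (the $\lfloor r/m\rfloor$ cross-terms cancel) yields a clean expression of the form
\begin{equation*}
C_{u,v,\bsk}(n)-C_{u,v,\bsk}(r)=\frac{\overline{m}_{u,\bsk}\bigl(q^2\overline{m}_{u,\bsk}+2rq\bigr)}{m_{u,v,\bsk}}.
\end{equation*}
Crucially, the numerator does not depend on $v$.

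Thus the target sum becomes $\overline{m}_{u,\bsk}(q^2\overline{m}_{u,\bsk}+2rq)\sum_{v\subseteq u}H_{u,v}/m_{u,v,\bsk}$. Using~\eqref{eq:defhu} and~\eqref{eq:defmuvk},
\begin{equation*}
\frac{H_{u,v}}{m_{u,v,\bsk}}=\prod_{j\in u}b_j^{-k_j}\cdot(-1)^{|u-v|},
\end{equation*}
so the sum over $v\subseteq u$ becomes $\prod_{j\in u}b_j^{-k_j}\sum_{v\subseteq u}(-1)^{|u-v|}=0$, since $u\neq\emptyset$ forces the binomial identity $\sum_{v\subseteq u}(-1)^{|u-v|}=(1-1)^{|u|}=0$. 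This gives the required cancellation.

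I do not anticipate any serious obstacles: once the divisibility $m_{u,v,\bsk}\mid\overline{m}_{u,\bsk}$ is recognized, the proof reduces to the same inclusion–exclusion vanishing that already drove Proposition~\ref{prop:gainzero}. The only delicate point is checking that the $v$-dependent pieces in the expansion of $C_{m,n}-C_{m,r}$ really do all collapse onto a single factor of $1/m_{u,v,\bsk}$; this is a short computation but must be done carefully to confirm that no residual $v$-dependence survives.
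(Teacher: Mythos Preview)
Your argument is correct. The reduction to $\widetilde G_{u,\bsk}(n)=\widetilde G_{u,\bsk}(r)$ is exactly right, and your key computation
\[
C_{m_{u,v,\bsk},n}-C_{m_{u,v,\bsk},r}=\frac{\overline{m}_{u,\bsk}\bigl(q^2\overline{m}_{u,\bsk}+2rq\bigr)}{m_{u,v,\bsk}}
\]
checks out (with $m=m_{u,v,\bsk}$, $t=\overline m_{u,\bsk}/m$, and $s=\lfloor r/m\rfloor$, the $s$-dependent terms cancel and one is left with $q^2mt^2+2rqt$). The vanishing then follows from $H_{u,v}/m_{u,v,\bsk}=\prod_{j\in u}b_j^{-k_j}(-1)^{|u-v|}$, just as in Proposition~\ref{prop:gainzero}.

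The paper takes a different, more combinatorial route: it fixes $i'$ and shows directly that the inner sum over any block of $\overline m_{u,\bsk}$ consecutive indices $i$ vanishes, because $\sum_{v\subseteq u}H_{u,v}\prod_{j\in u-v}b_j=0$. That immediately strips off the $q$ full blocks in each index and reduces the double sum to $\widetilde G_{u,\bsk}(r)$. Your approach instead works entirely through the closed form of Proposition~\ref{prop:cmn}. The paper's version is shorter and gives a slightly stronger intermediate fact (the per-block vanishing for each fixed $i'$), while yours avoids the indicator manipulations and stays purely algebraic; both ultimately hinge on the same alternating-sum identity over subsets of $u$.
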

\begin{proof}
For any $i'\in\natu$ and any $r\in\ints_{\overline{m}_{u,\bsk}}$,
\begin{align*}
&\phantom{=}\,\,
\sum_{i=r}^{r+\overline{m}_{u,\bsk}-1}
\prod_{j\in u}b_j
\ind_{\lfloor b_j^{k_j+1}\aij\rfloor   =\lfloor b_j^{k_j+1}\aipj\rfloor}
-\ind_{\lfloor b_j^{k_j}\aij\rfloor   =\lfloor b_j^{k_j}\aipj\rfloor}\\
&=\sum_{i=r}^{r+\overline{m}_{u,\bsk}-1}
\sum_{v\subseteq u}H_{u,v}
\ind_{\lfloor b_j^{k_j+1}\aij\rfloor   =\lfloor b_j^{k_j+1}\aipj\rfloor}
\times\ind_{\lfloor b_j^{k_j}\aij\rfloor   =\lfloor b_j^{k_j}\aipj\rfloor}\\
&=\sum_{v\subseteq u}H_{u,v}
\sum_{i=r}^{r+\overline{m}_{u,\bsk}-1}\ind_{i=\ip\bmod m_{u,v,\bsk}}\\
&=\sum_{v\subseteq u}H_{u,v}\prod_{j\in u-v}b_j\\
&=0.
  \end{align*}
  The last step follows by the 
 argument used in the proof of Proposition~\ref{prop:gainzero}.
If $r>0$, then
\begin{align*}
\widetilde G_{u,\bsk}(n) &= \sum_{i=0}^{r-1}\sum_{i'=0}^{r-1}
\prod_{j\in u}b_j\ind_{\lfloor b_j^{k_j+1}\aij\rfloor   =\lfloor b_j^{k_j+1}\aipj\rfloor}
-\ind_{\lfloor b_j^{k_j}\aij\rfloor   =\lfloor b_j^{k_j}\aipj\rfloor}\\
&= \widetilde G_{u,\bsk}(r).
\end{align*}
Now~\eqref{eq:notagain} follows by the normalization
in~\eqref{eq:defgain}.
\end{proof}

We left the case $r=0$ out of Proposition~\ref{prop:notagain}.
We know that $G_{u,\bsk}(n)=0$ in
that case.  However we have not chosen
a convention for $G_{u,\bsk}(0)$.
We think that $G_{u,\bsk}(0)=1$ is reasonable
since $n=0$ for RQMC is the same as $n=0$
for MC, but we have not found another need for such
a convention.

\begin{corollary}\label{cor:nvartozero}
  If $f\in L^2[0,1]^d$ and $\bsx_0,\dots,\bsx_{n-1}$
  are points of a Halton sequence randomized
  with a nested uniform scramble, or a random linear
  scramble with digital shift, then
  $$
  \lim_{n\to\infty} n\cdot\var\biggl(
  \frac1n\sum_{i=0}^{n-1}f(\bsx_i)\biggr) =0.
  $$
  \end{corollary}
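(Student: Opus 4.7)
The plan is to combine the exact variance identity established in Section~\ref{sec:background},
$$
n\cdot\var(\hat\mu_n) \;=\; \sum_{u\subseteq1{:}d}\sum_{\bsk\in\natu_0^u} G_{u,\bsk}(n)\,\sigma^2_{u,\bsk},
$$
with a dominated-convergence argument on the countable index set of pairs $(u,\bsk)$. Because $f\in L^2[0,1]^d$, the family $\{\sigma^2_{u,\bsk}\}$ is summable with $\sum\sigma^2_{u,\bsk}\le\sigma^2<\infty$. Forward-referencing the main result of Section~\ref{sec:gaininequalities}, which via the upper bound in~\eqref{eq:mainbounds} yields $\Gamma_d=\sup_n\Gamma_d(n)<\infty$, supplies the $n$-independent summable dominant $G_{u,\bsk}(n)\,\sigma^2_{u,\bsk}\le\Gamma_d\,\sigma^2_{u,\bsk}$.

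The central step is pointwise convergence $G_{u,\bsk}(n)\to 0$ for every non-empty $u\subseteq1{:}d$ and every $\bsk\in\natu_0^u$. For $n\ge\overline{m}_{u,\bsk}$, I would write $n=q\overline{m}_{u,\bsk}+r$ with $q\in\natu$ and $0\le r<\overline{m}_{u,\bsk}$. If $r=0$, Proposition~\ref{prop:gainzero} gives $G_{u,\bsk}(n)=0$. Otherwise Proposition~\ref{prop:notagain} yields
$$
G_{u,\bsk}(n)\;=\;\frac{r}{n}\,G_{u,\bsk}(r)\;\le\;\frac{\overline{m}_{u,\bsk}}{n}\max_{1\le r'<\overline{m}_{u,\bsk}}G_{u,\bsk}(r'),
$$
a maximum over finitely many indices, hence bounded; so the right-hand side is $O(1/n)$ with constant depending on $(u,\bsk)$, and $G_{u,\bsk}(n)\to 0$.

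Dominated convergence then delivers $\sum_{u,\bsk}G_{u,\bsk}(n)\,\sigma^2_{u,\bsk}\to 0$, which is the claim. The one subtle point is justifying the uniform-in-$n$ summable dominant: Proposition~\ref{prop:notagain} immediately gives $\sup_n G_{u,\bsk}(n)<\infty$ for each individual $(u,\bsk)$, but the summability of the dominant across indices relies on the uniform bound $\Gamma_d<\infty$ that is only proved later. If one wished the corollary to be self-contained, the cleanest alternative is an $\err/2$ split of the sum into a finite head (handled by the pointwise convergence above) and an $L^2$ tail made arbitrarily small by choosing the head large, together with the direct per-index bound on $G_{u,\bsk}(n)$ coming from~\eqref{eq:thatotherapproach}, bypassing the need for $\Gamma_d<\infty$ at this stage.
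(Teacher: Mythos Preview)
Your proof is correct and follows essentially the same route as the paper: the paper also invokes Proposition~\ref{prop:notagain} to write $G_{u,\bsk}(n)=\frac{n\bmod\overline{m}_{u,\bsk}}{n}\,G_{u,\bsk}(n\bmod\overline{m}_{u,\bsk})$, bounds the gain factor by $\Gamma_d$, and then applies dominated convergence with summable majorant $\Gamma_d\,\sigma^2_{u,\bsk}$ and pointwise convergence $(n\bmod\overline{m}_{u,\bsk})/n\to0$. Your remark about the forward reference to $\Gamma_d<\infty$ is accurate and applies equally to the paper's own argument.
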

  \begin{proof}
    Let $f$ have variance components $\sigma^2_{u,\bsk}$.
    Then
    \begin{align*}
n\cdot\var(\hat\mu) &=\sum_{u\subseteq1{:}d}\sum_{\bsk\in\natu_0^u}G_{u,\bsk}(n)\sigma^2_{u,\bsk}\\
&=\sum_{u\subseteq1{:}d}\sum_{\bsk\in\natu_0^u}
\frac{ n\bmod \overline{m}_{u,\bsk}}nG_{u,\bsk}(n)\sigma^2_{u,\bsk}\\
&\le\Gamma_d\sum_{u\subseteq1{:}d}\sum_{\bsk\in\natu_0^u}
\frac{ n\bmod \overline{m}_{u,\bsk}}n\sigma^2_{u,\bsk}\\
&\to 0
    \end{align*}
as $n\to\infty$.
  \end{proof}

The next proposition shows that any values of
$G_{u,\bsk}(n)$ reappear as values of
$G_{u,\bsk'}(n')$ where $\bsk'$ is any vector
in $\natu_0^u$ no smaller than $\bsk$ componentwise
and $n'$ is some value $n'\ge n$.

\begin{proposition}\label{prop:badside}
For $j\in u\subseteq1{:}d$ and $\bsk\in\natu_0^u$
define $\bsk'$ by $k'_j=k_j+1$ and
$k'_\ell = k_\ell$ for $\ell \in u-\{j\}$.
Then 
\begin{align}\label{eq:badside}
G_{u,\bsk'}(nb_j)=G_{u,\bsk}(n)
\end{align}
for all $n\in\natu$.
\end{proposition}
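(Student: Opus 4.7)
The plan is to combine the decomposition $\widetilde G_{u,\bsk}(n) = \sum_{v\subseteq u} H_{u,v} C_{u,v,\bsk}(n)$ from~\eqref{eq:GinHC} with the closed form for $C_{m,n}$ in Proposition~\ref{prop:cmn}. The key observation is that passing from $\bsk$ to $\bsk'$ multiplies every modulus $m_{u,v,\bsk}$ by exactly $b_j$, uniformly in $v$. Indeed, $\bsk'$ differs from $\bsk$ only in coordinate $j$, and from~\eqref{eq:defmuvk} the $j$-th factor in $m_{u,v,\bsk}$ goes from $b_j^{k_j+1}$ to $b_j^{k_j+2}$ when $j\in v$, and from $b_j^{k_j}$ to $b_j^{k_j+1}$ when $j\in u-v$; in both cases the factor grows by exactly $b_j$, so $m_{u,v,\bsk'}=b_j\, m_{u,v,\bsk}$.

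Next I would verify from~\eqref{eq:cmn} the scaling identity $C_{bm,bn}=b\,C_{m,n}$ for any $b,m,n\in\natu$. This is immediate: $\lfloor bn/(bm)\rfloor=\lfloor n/m\rfloor$, so each of the three terms $n$, $(2n-m)\lfloor n/m\rfloor$, $m\lfloor n/m\rfloor^2$ in Proposition~\ref{prop:cmn} picks up exactly one factor of $b$ when $(m,n)$ is replaced by $(bm,bn)$. Applying this with $b=b_j$ and $m=m_{u,v,\bsk}$ gives $C_{u,v,\bsk'}(nb_j) = b_j\, C_{u,v,\bsk}(n)$ for every $v\subseteq u$.

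Summing against the weights $H_{u,v}$ from~\eqref{eq:defhu}, which depend only on $u$ and $v$ and not on $\bsk$, then yields
\[
\widetilde G_{u,\bsk'}(nb_j) \;=\; \sum_{v\subseteq u}H_{u,v}\, b_j\, C_{u,v,\bsk}(n) \;=\; b_j\,\widetilde G_{u,\bsk}(n).
\]
Finally, the normalization in~\eqref{eq:defgain} contributes a factor of $1/(nb_j)$ on the left and $1/n$ on the right, the products $\prod_{j\in u}(b_j-1)^{-1}$ are identical on both sides, and the factor $b_j$ cancels. This gives~\eqref{eq:badside}.

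There is no genuine obstacle; the whole argument is bookkeeping. The one thing worth verifying with care is that $m_{u,v,\bsk'} = b_j\, m_{u,v,\bsk}$ in both cases $j\in v$ and $j\in u-v$, since otherwise the scaling would depend on $v$ and would not factor out of the alternating sum. The identity reflects a natural fact: refining coordinate $j$ by one additional base-$b_j$ digit requires exactly $b_j$ times as many points to have the same gain.
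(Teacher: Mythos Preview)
Your proof is correct and follows essentially the same route as the paper: both arguments observe that $m_{u,v,\bsk'}=b_j\,m_{u,v,\bsk}$ for every $v\subseteq u$, use the closed form in Proposition~\ref{prop:cmn} to deduce $C_{u,v,\bsk'}(nb_j)=b_j\,C_{u,v,\bsk}(n)$, sum against $H_{u,v}$ to get $\widetilde G_{u,\bsk'}(nb_j)=b_j\,\widetilde G_{u,\bsk}(n)$, and then cancel the extra $b_j$ against the normalization. Your write-up is slightly more explicit in isolating the scaling identity $C_{bm,bn}=b\,C_{m,n}$ and in checking the two cases $j\in v$ and $j\in u-v$ separately, but the substance is identical.
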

\begin{proof}
First
\begin{align*}
\widetilde G_{u,\bsk'}(b_jn) = 
\sum_{v\subseteq u}H_{u,v}C_{u,v,\bsk'}(b_jn).
\end{align*}
Now
\begin{align*}
C_{u,v,\bsk'}(b_jn)
&= b_jn + (2b_jn-m_{u,v,\bsk'})
\lfloor nb_j/m_{u,v,\bsk'}\rfloor - m_{u,v,\bsk'}\lfloor
b_jn/m_{u,v,\bsk'}\rfloor^2\\
&= b_jn + (2b_jn-b_jm_{u,v,\bsk})
\lfloor n/m_{u,v,\bsk}\rfloor - b_jm_{u,v,\bsk}\lfloor
n/m_{u,v,\bsk}\rfloor^2\\
&= b_jC_{u,v,\bsk}(n).
\end{align*}
It follows that $\widetilde G_{u,\bsk'}(b_jn)=b_j\widetilde G_{u,\bsk}$.
Then~\eqref{eq:badside} holds after normalization.
\end{proof}

\begin{corollary}\label{cor:notenough}
  For nonempty $u\subseteq1{:}{d}$ and $\bsk\in\natu_0^u$,
  $$
G(u,\bsk)\biggl(n\prod_{j\in u}b_j^{k_j}\biggr)=G(u,\bszero)(n)
  $$
holds for all $n\ge1$.
\end{corollary}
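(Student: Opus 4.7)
The plan is to obtain this as a direct iteration of Proposition~\ref{prop:badside}. That proposition lets us increment any single coordinate $k_j$ of $\bsk$ by one, at the cost of multiplying the sample-size argument by $b_j$, while preserving the value of the gain coefficient. The corollary just iterates this operation, starting from $\bszero$ and building up to an arbitrary $\bsk\in\natu_0^u$.

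I would proceed by induction on the total order $|\bsk|_1 := \sum_{j\in u}k_j$. The base case $|\bsk|_1=0$ means $\bsk=\bszero$, for which the claim is trivial. For the inductive step, suppose the identity holds for some $\bsk\in\natu_0^u$, and let $\bsk'$ differ from $\bsk$ by $k'_j = k_j+1$ at a single index $j\in u$, with $k'_\ell = k_\ell$ for $\ell\in u\setminus\{j\}$. Applying Proposition~\ref{prop:badside} to the sample size $N := n\prod_{\ell\in u}b_\ell^{k_\ell}$ gives
\begin{align*}
G_{u,\bsk'}\Bigl(b_j N\Bigr) = G_{u,\bsk}(N) = G_{u,\bszero}(n),
\end{align*}
where the second equality is the inductive hypothesis. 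Since $b_j N = n\prod_{\ell\in u}b_\ell^{k'_\ell}$, this is exactly the claim for $\bsk'$.

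Any $\bsk\in\natu_0^u$ can be reached from $\bszero$ by $|\bsk|_1$ such single-coordinate increments (in any order), so the induction covers all of $\natu_0^u$. I do not expect a serious obstacle: the work has already been done in Proposition~\ref{prop:badside}, and the only thing to verify is the bookkeeping of how the multiplicative factor on $n$ accumulates to $\prod_{j\in u}b_j^{k_j}$, which is immediate from the choice of which coordinate is incremented at each step.
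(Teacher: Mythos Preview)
Your proposal is correct and matches the paper's own proof, which simply says to make $\sum_{j\in u}k_j$ applications of Proposition~\ref{prop:badside}. Your induction on $|\bsk|_1$ is exactly this iteration spelled out, with the bookkeeping on the accumulated factor $\prod_{j\in u}b_j^{k_j}$ made explicit.
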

\begin{proof}
We make  $\sum_{j\in u}k_j$ applications
of Proposition~\ref{prop:notagain}.
\end{proof}

\section{Example computations}\label{sec:examples}

It is straightforward to compute the gain coefficients
for scrambled Halton points in some settings of interest.
Figure~\ref{fig:gains23} shows the gain coefficients in the
smallest interesting case: $d=2$ and $\bsb = (2,3)$
for $1\le n\le 36$.  We see that all
$\bsk\in\{(0,0),(0,1),(1,0),(1,0)\}$ attain the same maximal
gain factor of $3/2$.
All of the curves start at gain equal to one for $n=1$.
This makes sense because $n=1$ scrambled Halton
point is mathematically equivalent to $n=1$ Monte Carlo
point.
The curves are initially one for all  $n \le \prod_{j\in u}b_j^{k_j}$
(see Proposition~\ref{prop:gainone})
and then with some oscillation,
they reach zero at $n = \prod_{j\in u}b_j^{k_j+1}$
(see Proposition~\ref{prop:gainzero}).
After reaching
zero they keep oscillating, but they will never again (for any larger $n$) re-attain their maximum 
(see Proposition~\ref{prop:notagain}).
The curve for $\bsk$ attains its peak at $n=2\prod_{j\in u}b_j^{k_j}$.
The factor $\prod_{j\in u}b_j^{k_j}$ 
is in line with Proposition~\ref{prop:badside}.

\begin{figure}[t]
\centering
\includegraphics[width=.9\hsize]{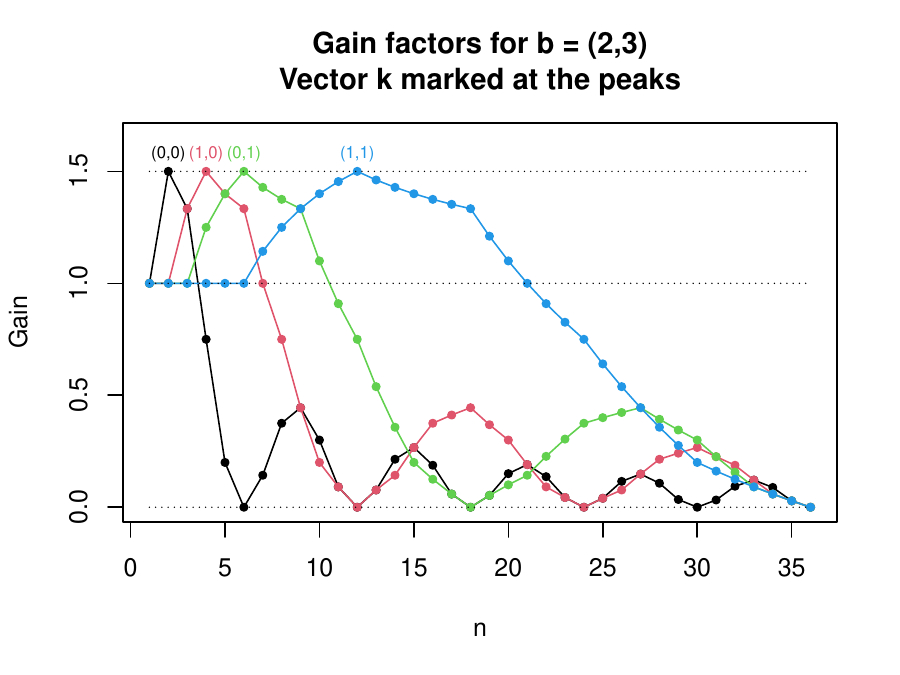}
\caption{\label{fig:gains23}
  For $d=2$ with $\bsb = (2,3)$ this figure shows the
  gains for $\bsk\in\{(0,0),(0,1),(1,0),(1,1)\}$ versus
  $1\le n\le 36$.  At $n=36$ all four of these gains are zero.
The same peak value $3/2$ attained by all curves.
In all cases, the maximum
is attained at $n=2\times b_1^{k_1}\times b_2^{k_2}$.
The horizontal reference lines are at gains $0$, $1$, and $3/2$.
  }
\end{figure}

Figure~\ref{fig:maxgains235} shows gain coefficients for
$d=3$ with $\bsb = (2,3,5)$.  The values of $n$ range
from $1$ to $1000$. Vectors $\bsk$ with 
$\prod_{j\in u}b_j^{k_j}>1000$ have gain $1$ for all $n$ in this range.
The plot shows gain curves for all other vectors $\bsk$.
It is clear that any value of $n$ has a maximal gain close
to the overall maximum (empirically $9/8$).  
In this worst case sense, the scrambled
Halton points do not have especially good values of $n$.
In another sense, described next, there do exist especially
good values of~$n$.

\begin{figure}[t]
\centering
\includegraphics[width=.9\hsize]{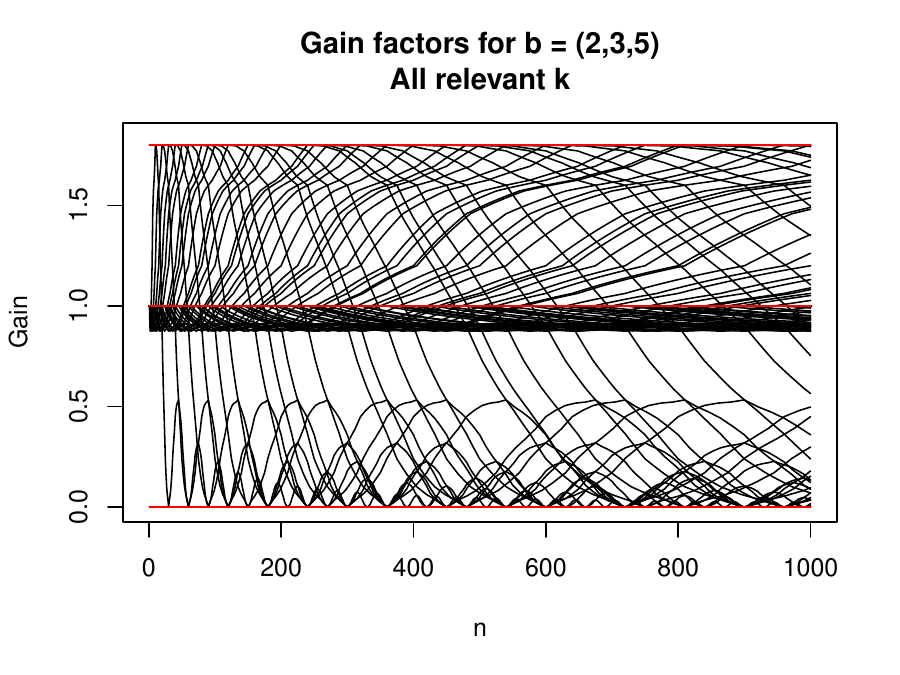}
\caption{\label{fig:maxgains235}
For $d=3$ with $\bsb = (2,3,5)$
this figure shows gain factors $G_{u,\bsk}(n)$ 
versus $n$ for all
  non-empty $u\subseteq\{1,2,3\}$ and all
  $\bsk$ with $\prod_{j\in u}b_j^{k_j}<n$.
  For any other $\bsk$ we know that $G_{u,\bsk}(n)=1$
  over this range for $n$.  There are horizontal reference lines
  at gains $0$, $1$, and $9/5$.
  }
\end{figure}

If we anticipate that smaller values of $|u|$ and of
$\prod_{j\in u}b_j^{k_j}$ correspond to more important features
of the function, then values of $n$ that are divisible by
products of small powers of the $b_j$ have an advantage.
We see in Figure~\ref{fig:gains23} that special values of $n$
give gain equal to zero for some of the effects with
small $\bsk$.
From Figure~\ref{fig:maxgains235} we can see that 
selecting such special value of $n$ will not give a meaningful
penalty with regard to worst case behavior.  This leaves us
more free to use convenient or highly
composite values of $n$.
Values of $n$ that are powers of $10$ are often popular with
users.  For the Halton sequence, such $n$ are very good for
the first and third input dimensions. 
A value like $n=1800=2^33^25^2$ can be expected to give
good results when the integrand depends strongly on
the first three components of $\bsx$ in a smooth way.
A user who wants $n$ to be a power of $10$ might
then use bases $2$ and $5$ for what they think are
most and second most important input variables, respectively.

A striking feature of Figure~\ref{fig:maxgains235} is a thick band between gains of 1 and 7/8. The latter value is $G_{1{:}3,\bszero}(2)$. The gains for every $\bsk$ decrease from $1$ to $7/8$ before rising to $9/5$.

In  Figures~\ref{fig:gains23} and~\ref{fig:maxgains235}
we never see any $G_{u,\bsk}(n)
>\max_{n\in\natu}G_{u,\bszero}(n)$.  
Theorem~\ref{thm:onlykzero} in Section~\ref{sec:gaininequalities} proves that this can never happen. Theorem~\ref{thm:supersetswin} in Section~\ref{sec:gaininequalities} shows that if $v\subsetneq u$ then  $\sup_{n\ge1}G_{v,\bszero}(n)\le \sup_{n\ge1}G_{u,\bszero}(n)$. Therefore the largest gains for $d$ variables arise in $G_{1:d,\bszero}(n)$ and we only need to consider $n$ from $1$ to $\prod_{j=1}^db_j$.

\section{Upper bounds for gain}\label{sec:gaininequalities}

It is of interest to know the largest possible values of gain coefficients.  Here, Theorem~\ref{thm:onlykzero} shows that we only need to consider $\bsk=\bszero$.  Then Theorem~\ref{thm:supersetswin} shows that we only need to consider $u=1{:}d$.  Applying Proposition~\ref{prop:notagain}, the largest possible gain for $d\ge1$ is one of $G_{1{:}d,\bszero}(n)$ for $1\le n\le\prod_{j=1}^db_j$.

\begin{theorem}\label{thm:onlykzero}
    For all  $1\le d<\infty$ and all nonempty $u\subseteq1{:}d$
and all $\bsk\in\natu_0^u$,
\begin{align}
\sup_{n\in\natu} G_{u,\bsk}(n) = \sup_{n\in\natu}G_{u,\bszero}(n).
\end{align}
\end{theorem}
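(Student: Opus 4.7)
The plan is to show that every value $G_{u,\bsk}(n)$ equals a convex combination of two values of $G_{u,\bszero}$, and so cannot exceed $\sup_{n'}G_{u,\bszero}(n')$; the reverse inequality is immediate from Corollary~\ref{cor:notenough}.

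The key observation is the factorization $m_{u,v,\bsk}=M\cdot m_{u,v,\bszero}$, where $M=\prod_{j\in u}b_j^{k_j}$ is independent of $v$. I would therefore begin by analyzing $C_{mM,n}$ for arbitrary $m\in\natu$. Group the indices $0\le i<n$ by their residue $t\in\{0,\dots,M-1\}$ modulo $M$, writing $i=Ms+t$. The condition $i\equiv i'\bmod mM$ forces $t=t'$ and then $s\equiv s'\bmod m$. Writing $n=qM+r$ with $0\le r<M$, the number of available $s$ for residue class $t$ is $q+1$ when $t<r$ and $q$ when $t\ge r$, which yields
\begin{align*}
C_{mM,n}=r\,C_{m,q+1}+(M-r)\,C_{m,q}.
\end{align*}

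Applying this with $m=m_{u,v,\bszero}$ for each $v\subseteq u$ and summing against $H_{u,v}$ gives
\begin{align*}
\widetilde G_{u,\bsk}(n)=r\,\widetilde G_{u,\bszero}(q+1)+(M-r)\,\widetilde G_{u,\bszero}(q).
\end{align*}
Dividing by $n\prod_{j\in u}(b_j-1)$ and using the identity $r(q+1)+(M-r)q=qM+r=n$, the normalization turns this into the convex combination
\begin{align*}
G_{u,\bsk}(n)=\frac{r(q+1)}{n}\,G_{u,\bszero}(q+1)+\frac{(M-r)q}{n}\,G_{u,\bszero}(q).
\end{align*}
In particular $G_{u,\bsk}(n)\le \max\{G_{u,\bszero}(q),G_{u,\bszero}(q+1)\}\le\sup_{n'\in\natu}G_{u,\bszero}(n')$, proving one inequality. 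The opposite inequality follows by taking $n=n'M$ in Corollary~\ref{cor:notenough} (equivalently, $r=0$ in the display above), which gives $G_{u,\bsk}(n'M)=G_{u,\bszero}(n')$ and hence $\sup_{n}G_{u,\bsk}(n)\ge\sup_{n'}G_{u,\bszero}(n')$.

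The main obstacle is establishing the decomposition of $C_{mM,n}$: once $\widetilde G_{u,\bsk}(n)$ has been split into the two pieces weighted by $r$ and $M-r$, the rest is bookkeeping. A minor subtlety is ensuring the argument is valid when $q=0$ (so $n<M$); in that case the first summand vanishes only if $r=0$, otherwise we correctly recover $G_{u,\bsk}(n)=G_{u,\bszero}(1)=1$, matching Proposition~\ref{prop:gainone}. I do not expect complications beyond this.
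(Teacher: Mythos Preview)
Your proof is correct and takes a genuinely different route from the paper's. The paper writes $n'=nb^*+r$ with $b^*=M$, expresses $G_{u,\bsk}(n')$ via the fractional-part formula~\eqref{eq:thatotherapproach}, extends the dependence on $r/b^*$ to a continuous variable $x\in[0,1]$, and then shows (after cancelling the $x^2$ terms using $\sum_{v}(-1)^{|u-v|}=0$) that the result has the form $A/(n+x)+B$, hence is monotone in $x$ and maximized at an endpoint. You instead prove the combinatorial identity $C_{mM,n}=r\,C_{m,q+1}+(M-r)\,C_{m,q}$ by partitioning indices according to their residue modulo $M$, and then read off the exact convex-combination formula
\[
G_{u,\bsk}(n)=\frac{r(q+1)}{n}\,G_{u,\bszero}(q+1)+\frac{(M-r)q}{n}\,G_{u,\bszero}(q).
\]
Your argument is shorter and more elementary: it avoids the continuous extension and the algebraic cancellation of the quadratic terms, and it yields a sharper statement (an explicit convex combination rather than mere monotonicity, from which monotonicity in $r$ can in fact be recovered). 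The paper's approach, on the other hand, makes the monotone interpolation between $G_{u,\bszero}(q)$ and $G_{u,\bszero}(q+1)$ explicit, which is conceptually pleasant though not used elsewhere. Your handling of the edge case $q=0$ is fine: the weight on $G_{u,\bszero}(0)$ vanishes, so the undefined quantity never enters.
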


\begin{proof}
Let $b^*=\prod_{j\in u}b_j^{k_j}$.
    Corollary~\ref{cor:notenough} shows that
      \begin{align}\label{eq:fromcornotenoug}
G_{u,\bsk}(nb^*)=G_{u,\bszero}(n).
\end{align}
 It suffices to show that for $n'$ such that $nb^*\leq n'\leq (n+1)b^*$, $G_{u,\bsk}(n')$ is maximized at the endpoints. That  is, we will show that
  $$\sup_{nb^*\leq n'\leq (n+1)b^*}G_{u,\bsk}(n')=\max\bigl(G_{u,\bsk}(nb^*),G_{u,\bsk}((n+1)b^*)\bigr)
$$  
which is at most $\sup_{n\in\natu}G_{u,\bszero}(n)$
by~\eqref{eq:fromcornotenoug}.
By equations~\eqref{eq:GinHC} and \eqref{eq:thatotherapproach},
\begin{equation}\label{eq:Gexpr}
    \widetilde G_{u,\bsk}(n') 
= \sum_{v\subseteq u}H_{u,v}m_{u,v,\bsk}\err_v'(1-\err_v')
\end{equation}
where $\err_v'=n'/m_{u,v,\bsk}-\lfloor n'/m_{u,v,\bsk}\rfloor$. We write $n'=nb^*+r$ for $0\leq r\leq b^*$. Because $m_{u,v,\bsk}=b^*m_{u,v,\bszero}$,
\begin{align*}
   \err_v'&=\frac{n'}{m_{u,v,\bsk}}-\Bigl\lfloor \frac{n'}{m_{u,v,\bsk}}\Bigr\rfloor\\
   &=\frac{n+r/b^*}{m_{u,v,\bszero}}-\Bigl\lfloor \frac{n+r/b^*}{m_{u,v,\bszero}}\Bigr\rfloor\\
   &=\frac{r}{b^*m_{u,v,\bszero}}+\frac{n}{m_{u,v,\bszero}}-\Bigl\lfloor \frac{n}{m_{u,v,\bszero}}\Bigr\rfloor\\
   &=\frac{r}{b^*m_{u,v,\bszero}}+\err_v
\end{align*}
  where  $\err_v=n/m_{u,v,\bszero}-\lfloor n/m_{u,v,\bszero}\rfloor$. Therefore, the normalized gain coefficients $G_{u,\bsk}(n')$ can be expressed as
  \begin{align*}
      G_{u,\bsk}(n')&=\frac1{n'}\prod_{j\in u}(\bj-1)^{-1} \sum_{v\subseteq u}H_{u,v}m_{u,v,\bsk}\err_v'(1-\err_v')\\
      &=\prod_{j\in u}(\bj-1)^{-1} \sum_{v\subseteq u}H_{u,v}\frac{b^*m_{u,v,\bszero}}{nb^*+r}\err_v'(1-\err_v')\\
      &=\prod_{j\in u}(\bj-1)^{-1} \sum_{v\subseteq u}H_{u,v}\frac{m_{u,v,\bszero}}{n+r/b^*}\Bigl(\err_v+\frac{r}{b^*m_{u,v,\bszero}}\Bigr)\Bigl(1-\err_v-\frac{r}{b^*m_{u,v,\bszero}}\Bigr)\\
      &=\prod_{j\in u}(\bj-1)^{-1} \sum_{v\subseteq u}H_{u,v}\frac{m_{u,v,\bszero}}{n+x}\biggl(\err_v(1-\err_v)+(1-2\err_v)\frac{x}{m_{u,v,\bszero}}-\frac{x^2}{m_{u,v,\bszero}^2}\biggr)
  \end{align*}
 where we have replaced $r/b^*$ with $x$. Let us extend the domain of $x$ to all real numbers in $[0,1]$. Our goal becomes to prove that $G_{u,\bsk}(n')$, as a function of $x$, is monotonic on $[0,1]$.

 First notice that because $H_{u,v}= \prod_{j\in v}b_j\prod_{j\in u-v}(-1)=(-1)^{|u-v|}m_{u,v,\bszero}$,
 \begin{align*}
      \sum_{v\subseteq u}H_{u,v}\frac{m_{u,v,\bszero}}{n+x}\frac{x^2}{m_{u,v,\bszero}^2}=\frac{x^2}{n+x}\sum_{v\subseteq u}(-1)^{|u-v|}=0.
 \end{align*}
 This allows us to rewrite $\prod_{j\in u}(\bj-1)G_{u,\bsk}(n')$ as
 \begin{align*}
 &\sum_{v\subseteq u}H_{u,v}\frac{m_{u,v,\bszero}}{n+x}\Bigl(\err_v(1-\err_v)+(1-2\err_v)\frac{x}{m_{u,v,\bszero}}\Bigr)\\
 =&    \sum_{v\subseteq u}H_{u,v}m_{u,v,\bszero}\err_v(1-\err_v)\frac{1}{n+x}+\sum_{v\subseteq u}H_{u,v}(1-2\err_v)\frac{x}{n+x}\\
 =&\frac{1}{n+x}\sum_{v\subseteq u}H_{u,v}\Bigl(m_{u,v,\bszero}\err_v(1-\err_v)-n(1-2\err_v)\Bigr)+\sum_{v\subseteq u}H_{u,v}(1-2\err_v).
 \end{align*}
 Monotonicity of $G_{u,\bsk}(n')$ follows from monotonicity of $1/(n+x)$ on $[0,1]$ and hence $G_{u,\bsk}(n')$ is maximized at either endpoint.
\end{proof}

\begin{theorem}\label{thm:supersetswin}
    For all  $1\le d<\infty$ and all nonempty $v\subseteq u\subseteq1{:}d$,
\begin{align}\label{eq:supersetswin}
\sup_{n\in\natu} G_{v,\bszero}(n) \le \sup_{n\in\natu}G_{u,\bszero}(n).
\end{align}
\end{theorem}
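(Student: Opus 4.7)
The plan is to reduce the inequality to a single inductive step. Writing $u-v=\{j_1,\dots,j_t\}$ and setting $v_0=v$, $v_\ell=v\cup\{j_1,\dots,j_\ell\}$, it suffices to show $\sup_n G_{v_{\ell-1},\bszero}(n)\le\sup_n G_{v_\ell,\bszero}(n)$ for every $\ell$. Fix notation $u=v\cup\{j^*\}$ with $j^*\notin v$; the target is a pointwise identity linking $G_{u,\bszero}$ to $G_{v,\bszero}$ at a well-chosen argument.

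Starting from~\eqref{eq:GinHC}, I would split the sum $\widetilde G_{u,\bszero}(n)=\sum_{w\subseteq u}H_{u,w}C_{m_{u,w,\bszero},n}$ according to whether $j^*\in w$. For $w\subseteq v$ one has $m_{u,w,\bszero}=m_{v,w,\bszero}$, while the extra index $j^*\in u-w$ contributes a factor of $-1$ through~\eqref{eq:defhu}, so $H_{u,w}=-H_{v,w}$. For $w=w'\cup\{j^*\}$ with $w'\subseteq v$, definitions~\eqref{eq:defhu} and~\eqref{eq:defmuvk} give $H_{u,w'\cup\{j^*\}}=b_{j^*}H_{v,w'}$ and $m_{u,w'\cup\{j^*\},\bszero}=b_{j^*}m_{v,w',\bszero}$. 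Collecting terms yields
\[
\widetilde G_{u,\bszero}(n)=-\widetilde G_{v,\bszero}(n)+b_{j^*}\sum_{w\subseteq v}H_{v,w}C_{b_{j^*}m_{v,w,\bszero},n}.
\]

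The decisive step is to evaluate at $n\mapsto b_{j^*}n$ and apply the scaling identity $C_{b_{j^*}m,b_{j^*}n}=b_{j^*}C_{m,n}$, which is exactly the computation driving Proposition~\ref{prop:badside}. The residual sum collapses to $b_{j^*}\widetilde G_{v,\bszero}(n)$, giving
\[
\widetilde G_{u,\bszero}(b_{j^*}n)=-\widetilde G_{v,\bszero}(b_{j^*}n)+b_{j^*}^2\,\widetilde G_{v,\bszero}(n),
\]
and dividing by the normalization $b_{j^*}n(b_{j^*}-1)\prod_{j\in v}(b_j-1)$ produces the clean identity
\[
G_{u,\bszero}(b_{j^*}n)=\frac{b_{j^*}\,G_{v,\bszero}(n)-G_{v,\bszero}(b_{j^*}n)}{b_{j^*}-1}.
\]

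To finish, I would pick $n^*$ attaining $\max_n G_{v,\bszero}(n)$, which exists because Proposition~\ref{prop:notagain} confines the supremum to the finite range $n\le\overline m_{v,\bszero}$. Since $G_{v,\bszero}(b_{j^*}n^*)\le G_{v,\bszero}(n^*)$, the identity yields
\[
G_{u,\bszero}(b_{j^*}n^*)\ge\frac{(b_{j^*}-1)\,G_{v,\bszero}(n^*)}{b_{j^*}-1}=\sup_n G_{v,\bszero}(n),
\]
completing the inductive step. The one nontrivial guess is the substitution $n'=b_{j^*}n$: only with this choice does $\sum_{w\subseteq v}H_{v,w}C_{b_{j^*}m_{v,w,\bszero},\,b_{j^*}n}$ telescope through Proposition~\ref{prop:badside}'s scaling into $b_{j^*}\widetilde G_{v,\bszero}(n)$. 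Without that observation, the relationship between the two gain sequences remains a tangled mixed sum and the comparison of supremums would not be transparent.
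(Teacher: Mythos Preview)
Your proposal is correct and follows essentially the same route as the paper: both reduce to adding a single index $j^*$, split the sum over $w\subseteq u$ according to whether $j^*\in w$, and arrive at the identity $G_{u,\bszero}(b_{j^*}n)=\bigl(b_{j^*}G_{v,\bszero}(n)-G_{v,\bszero}(b_{j^*}n)\bigr)/(b_{j^*}-1)$, then evaluate at a maximizer $n^*$ of $G_{v,\bszero}$. The only cosmetic difference is that you work directly with the counts $C_{m,n}$ and invoke the scaling $C_{bm,bn}=bC_{m,n}$ from Proposition~\ref{prop:badside}, whereas the paper passes through the representation~\eqref{eq:thatotherapproach} in terms of $K(\err_v)=\err_v(1-\err_v)$; the resulting identity~\eqref{eq:GuGv} is the same.
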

\begin{proof}
    It suffices to show the conclusion holds when $u-v$ is a single element $j^*$ and apply induction. Denote the maximizer of $G_{v,\bszero}(n)$ as $n^*$. Our goal is to show that
    \begin{equation}\label{eq:GvGu}
        \sup_{n\in\natu} G_{v,\bszero}(n)=G_{v,\bszero}(n^*)\leq G_{u,\bszero}(b_{j^*}n^*)\leq \sup_{n\in\natu}G_{u,\bszero}(n).
    \end{equation}
    For any subset $w\subseteq v$, we define $w_+=w\cup \{j^*\}$. Then 
    \begin{equation}\begin{aligned}\label{eq:identities}
    H_{u,w_+}&=b_{j^*}H_{u,w}, & H_{u,w}&=-H_{v,w},\\
    m_{u,w_+,\bszero}&=b_{j^*}m_{u,w,\bszero},\quad\text{and}\quad
    & m_{u,w,\bszero}&=m_{v,w,\bszero}.
\end{aligned}\end{equation}
       We also introduce $K(x)=x(1-x)$ 
    to simplify some expressions. Starting with  equation~\eqref{eq:Gexpr} and applying identities from~\eqref{eq:identities}, we get for any $n$ divisible by $b_{j^*}$ that
        \begin{align}\label{eq:GuGv}
        \widetilde G_{u,\bszero}(n)&=\sum_{w\subseteq u}H_{u,w}m_{u,w,\bszero}K(\err_w)\nonumber\\
        &=\sum_{w\subseteq v}H_{u,w_+}m_{u,w_+,\bszero}K\biggl(\frac{n}{m_{u,w_+,\bszero}}-\Bigl\lfloor\frac{n}{m_{u,w_+,\bszero}}\Bigr\rfloor\biggr)
        \nonumber\\                   &\quad
        +\sum_{w\subseteq v}H_{u,w}m_{u,w,\bszero}K\biggl(\frac{n}{m_{u,w,\bszero}}-\Bigl\lfloor\frac{n}{m_{u,w,\bszero}}\Bigr\rfloor\biggr)\nonumber\\
         &=\sum_{w\subseteq v}b^2_{j^*}H_{v,w}m_{v,w,\bszero}K\biggl(\frac{n}{b_{j^*}m_{v,w,\bszero}}-\Bigl\lfloor\frac{n}{b_{j^*}m_{v,w,\bszero}}\Bigr\rfloor\biggr)\nonumber\\ 
        &\quad-\sum_{w\subseteq v}H_{v,w}m_{v,w,\bszero}K\biggl(\frac{n}{m_{v,w,\bszero}}-\Bigl\lfloor\frac{n}{m_{v,w,\bszero}}\Bigr\rfloor\biggr) \nonumber\\
        &=b^2_{j^*}\widetilde G_{v,\bszero}(n/b_{j^*})-\widetilde G_{v,\bszero}(n).
    \end{align}
The corresponding normalized coefficient is
    \begin{align*}
        G_{u,\bszero}(n)&=\frac{1}{b_{j^*}n}\prod_{j\in u}(\bj-1)^{-1}\Bigl(b^2_{j^*}\widetilde G_{v,\bszero}(n/b_{j^*})-\widetilde G_{v,\bszero}(n)\Bigr)\nonumber\\
        &=\frac{b_{j^*}}{(b_{j^*}-1)n}\prod_{j\in v}(\bj-1)^{-1}\widetilde G_{v,\bszero}(n/b_{j^*})-\frac{1}{(b_{j^*}-1)b_{j^*}n}\prod_{j\in v}(\bj-1)^{-1}\widetilde G_{v,\bszero}(n)\nonumber\\
        &=\frac{b_{j^*}}{b_{j^*}-1}G_{v,\bszero}(n/b_{j^*})-\frac{1}{b_{j^*}-1}G_{v,\bszero}(n).
    \end{align*}
    Now, using the fact that $n^*$ is the maximizer of $G_{v,\bszero}(n)$
\begin{align*}G_{u,\bszero}(b_{j^*}n^*)
&=\frac{b_{j^*}}{b_{j^*}-1}G_{v,\bszero}(n^*)-\frac{1}{b_{j^*}-1}G_{v,\bszero}(b_{j^*}n^*)\\
&\ge G_{v,\bszero}(n^*).
\end{align*}
    The theorem immediately follows from equation~\eqref{eq:GvGu}.
\end{proof}

 \begin{theorem}\label{thm:betterupperbound}
     For scrambled Halton points the gains satisfy
\begin{align}\label{eq:gainbound2}
\sup_{n\in\natu} G_{u,\bsk}(n) \le \prod_{j\in u-\{j_m\}}\frac{b_j}{b_j-1}.
\end{align}
  for all $d\ge1$, all non-empty $u\subseteq1{:}d$ and all $\bsk\in\natu_0^u$, where $j_m=\arg\min_{j\in u} b_j$.
 \end{theorem}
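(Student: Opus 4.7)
The plan is to reduce to $\bsk=\bszero$ by Theorem~\ref{thm:onlykzero} and then induct on $|u|$. For the base case $|u|=1$ the empty product equals $1$, and a direct computation gives $G_{\{j\},\bszero}(n)=r(b_j-r)/(n(b_j-1))$ for $n=qb_j+r$, which is bounded by $1$ with equality at $n=1$. For $|u|\ge2$, pick any $j^*\in u$ with $j^*\ne j_m$, set $v=u-\{j^*\}$, and note that $j_m$ is still the smallest-prime index of $v$. The inductive hypothesis then gives $\Gamma_v:=\sup_n G_{v,\bszero}(n)\le\prod_{j\in v-\{j_m\}}b_j/(b_j-1)$, and the target bound for $u$ is exactly $b_{j^*}/(b_{j^*}-1)$ times this.

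The central tool is an extension of the identity from the proof of Theorem~\ref{thm:supersetswin} from $n$ divisible by $b_{j^*}$ to \emph{every} positive integer $n$. Define a continuous extension of $\widetilde G_{v,\bszero}$ to real $x>0$ by
\[\widetilde G_{v,\bszero}(x)=\sum_{w\subseteq v}(-1)^{|v-w|}m_{v,w,\bszero}^2\,\err_w(1-\err_w),\qquad\err_w=\frac{x}{m_{v,w,\bszero}}-\Bigl\lfloor\frac{x}{m_{v,w,\bszero}}\Bigr\rfloor,\]
and let $\bar G_{v,\bszero}(x)=\widetilde G_{v,\bszero}(x)/(x\prod_{j\in v}(b_j-1))$. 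Repeating the splitting of $w\subseteq u$ by whether $j^*\in w$, but without the assumption $b_{j^*}\mid n$, yields the extended recurrence
\[G_{u,\bszero}(n)=\frac{b_{j^*}}{b_{j^*}-1}\bar G_{v,\bszero}(n/b_{j^*})-\frac{1}{b_{j^*}-1}G_{v,\bszero}(n)\]
for every positive integer $n$.

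The key lemma is that $0\le\bar G_{v,\bszero}(x)\le\Gamma_v$ for all real $x>0$, which I would prove by showing that $\widetilde G_{v,\bszero}(x)$ is piecewise linear on each $[q,q+1]$ for $q\in\natu_0$. For $x=q+s$ with $s\in[0,1)$, the identity $m_{v,w,\bszero}\err_w=(q\bmod m_{v,w,\bszero})+s=:p_w+s$ makes $m_{v,w,\bszero}^2\err_w(1-\err_w)=(p_w+s)(m_{v,w,\bszero}-p_w-s)$ quadratic in $s$ with $s^2$-coefficient equal to $-1$; summing with signs $(-1)^{|v-w|}$ kills this coefficient since $\sum_{w\subseteq v}(-1)^{|v-w|}=0$. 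Thus $\widetilde G_{v,\bszero}(q+s)=(1-s)\widetilde G_{v,\bszero}(q)+s\widetilde G_{v,\bszero}(q+1)$, and dividing by $(q+s)\prod_{j\in v}(b_j-1)$ expresses $\bar G_{v,\bszero}(q+s)$ as a convex combination of the integer gains $G_{v,\bszero}(q)$ and $G_{v,\bszero}(q+1)$, both of which lie in $[0,\Gamma_v]$.

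Combining these and using $G_{v,\bszero}(n)\ge0$ gives $G_{u,\bszero}(n)\le\frac{b_{j^*}}{b_{j^*}-1}\bar G_{v,\bszero}(n/b_{j^*})\le\frac{b_{j^*}}{b_{j^*}-1}\Gamma_v$, matching the induction target. The main obstacle is the piecewise-linearity observation: the recurrence from Theorem~\ref{thm:supersetswin} only handles $b_{j^*}\mid n$, and pushing it through at non-integer arguments $n/b_{j^*}$ requires a matching non-negativity lower bound that the recurrence itself does not propagate. Once the $s^2$-coefficient is seen to cancel via $\sum_w(-1)^{|v-w|}=0$, both the lower and upper bounds for $\bar G_{v,\bszero}$ collapse to the corresponding integer statements and the induction closes.
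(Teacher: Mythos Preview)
Your proof is correct and follows essentially the same approach as the paper. Both reduce to $\bsk=\bszero$ via Theorem~\ref{thm:onlykzero}, induct on $|u|$ by removing some $j^*\ne j_m$, drop the nonnegative term $\widetilde G_{v,\bszero}(n)$, and bound the remaining sum by $\tfrac{b_{j^*}}{b_{j^*}-1}\Gamma_v$; your continuous extension $\bar G_{v,\bszero}(x)$ is exactly the paper's $G_{v,*}(n)$ evaluated at $n=b_{j^*}x$, and your piecewise-linearity argument (the $s^2$ coefficients cancel because $\sum_{w\subseteq v}(-1)^{|v-w|}=0$) is the same identity that underlies the monotonicity step the paper imports from the proof of Theorem~\ref{thm:onlykzero}. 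The only difference is packaging: you phrase the key step as ``$\widetilde G_{v,\bszero}$ is affine on each $[q,q+1]$, hence $\bar G_{v,\bszero}(x)$ is a convex combination of integer gains,'' while the paper phrases it as ``$G_{v,*}$ is monotone on each $[qb_{j^*},(q+1)b_{j^*}]$, hence its supremum over integers equals $\sup_n G_{v,\bszero}(n)$.''
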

 \begin{proof}
     According to Theorem~\ref{thm:onlykzero}, it suffices to prove the theorem for $\bsk=\bszero$. 
Therefore
 $$
\sup_{n\in\natu}G_{u,\bsk}(n)
=\sup_{n\in\natu}G_{u,\bszero}(n)
= \max_{1\le n\le \overline{m}}G_{u,\bszero}(n)
$$ where $\overline{m}=\overline{m}_{u,\bszero}=\overline{m}_{u,u,\bszero}
=\prod_{j\in u}b_j$ and the lower limit $1$ is $\underline{m}_{u,\bszero}=m_{u,\emptyset,\bszero}$.

    We proceed by induction on $|u|$.
     When $u$ only contains a single element $j$, a straightforward calculation shows for $1\leq n \leq b_j$ that
   $$G_{u,\bszero}(n)=\frac{b_j-n}{b_j-1}.$$
    So $\sup_{n\in\natu} G_{u,\bszero}(n)=G_{u,\bszero}(1)=1$ and the theorem is trivially true for $|u|=1$.
    
    Now for $|u|>1$, we assume that equation~\eqref{eq:gainbound2} holds for $v=u\setminus \{j^*\}$ with $j^*\neq j_m$ and then prove it holds for $u$. From equation~\eqref{eq:GuGv} and non-negativity of $\widetilde G_{v,\bszero}(n)$, 
    $$ \widetilde G_{u,\bszero}(n)\leq \sum_{w\subseteq v}b_{j^*}^2H_{v,w}m_{v,w,\bszero}K\biggl(\frac{n}{b_{j^*}m_{v,w,\bszero}}-\Bigl\lfloor\frac{n}{b_{j^*}m_{v,w,\bszero}}\Bigr\rfloor\biggr).$$ 
Let $m_{v,w,*}=b_{j^*}m_{v,w,\bszero}$ and 
$$G_{v,*}(n)=\frac1{n}\prod_{j\in v}(\bj-1)^{-1}\sum_{w\subseteq v}H_{v,w}m_{v,w,*}K\biggl(\frac{n}{m_{v,w,*}}-\Bigl\lfloor\frac{n}{m_{v,w,*}}\Bigr\rfloor\biggr).$$ 
We can proceed as in the proof of Theorem~\ref{thm:onlykzero} with $b^*$ replaced by $b_{j^*}$ and conclude
$$\sup_{n\in\natu} G_{v,*}(n)=\sup_{n\in\natu} G_{v,\bszero}(n)\leq \prod_{j\in v-\{j_m\}}\frac{b_j}{b_j-1}.$$ 
Hence
        \begin{align*}
        G_{u,\bszero}(n)&\leq \frac1{n}\prod_{j\in u}(\bj-1)^{-1} \sum_{w\subseteq v}b_{j^*}^2H_{v,w}m_{v,w,\bszero}K\biggl(\frac{n}{b_{j^*}m_{v,w,\bszero}}-\Bigl\lfloor\frac{n}{b_{j^*}m_{v,w,\bszero}}\Bigr\rfloor\biggr)\\
        &=\frac{b_{j^*}}{b_{j^*}-1}G_{v,*}(n)\\
        &\leq \prod_{j\in u-\{j_m\}}\frac{b_j}{b_j-1}
    \end{align*}
and the theorem follows from induction.
\end{proof}

\begin{corollary}\label{cor:halfprodratio}
For scrambled Halton points in dimension $d\ge1$
$$\sup_{n\ge1}\max_{u\subseteq1{:}d}\sup_{\bsk\in\natu_0^u}G_{u,\bsk}(n)\le \frac12\prod_{j=1}^d\frac{b_j}{b_j-1}.$$
\end{corollary}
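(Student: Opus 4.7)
The plan is to combine Theorem~\ref{thm:betterupperbound} with the single elementary fact that $b_1 = 2$, so that the factor $b_1/(b_1-1) = 2$ accounts exactly for the $1/2$ on the right-hand side. By Theorem~\ref{thm:betterupperbound}, for every non-empty $u\subseteq1{:}d$ and every $\bsk\in\natu_0^u$,
$$
\sup_{n\in\natu} G_{u,\bsk}(n) \le \prod_{j\in u-\{j_m\}}\frac{b_j}{b_j-1},
$$
where $j_m = \arg\min_{j\in u}b_j$. The empty-set case $u=\emptyset$ can be ignored since its variance component is $0$ by convention, so I would restrict attention to non-empty $u$.

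Because the $b_j$ are the primes in increasing order, $b_1 = 2$ is strictly smaller than every other $b_j$, so the argument splits naturally into two cases. If $1\in u$, then $j_m = 1$ and $u-\{j_m\}\subseteq\{2,\dots,d\}$; since each factor $b_j/(b_j-1)$ exceeds $1$,
$$
\prod_{j\in u-\{j_m\}}\frac{b_j}{b_j-1}\le\prod_{j=2}^d\frac{b_j}{b_j-1}=\frac12\prod_{j=1}^d\frac{b_j}{b_j-1},
$$
where the equality uses $b_1/(b_1-1)=2$. If $1\notin u$, then $j_m\ge2$ and $u-\{j_m\}\subseteq\{2,\dots,d\}\setminus\{j_m\}\subseteq\{2,\dots,d\}$, so the same monotonicity yields the same final bound (in fact one factor smaller, but that is unnecessary).

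Taking the maximum over non-empty $u$ and the supremum over $\bsk\in\natu_0^u$ then gives the corollary directly. There is no real obstacle here: the entire content is that $b_1/(b_1-1)=2$ is the single factor ``missing'' from the bound in Theorem~\ref{thm:betterupperbound} when $j_m=1$, and this factor is the largest of all the $b_j/(b_j-1)$, so removing it from the product is always the worst case. The only bit of bookkeeping is verifying the case $1\notin u$, which as noted is actually more favorable.
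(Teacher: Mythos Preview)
Your proof is correct and rests on the same key ingredient as the paper's---Theorem~\ref{thm:betterupperbound} together with $b_1/(b_1-1)=2$. The one organizational difference is that the paper first invokes Theorems~\ref{thm:onlykzero} and~\ref{thm:supersetswin} to reduce the whole supremum to the single case $u=1{:}d$, $\bsk=\bszero$, and then applies Theorem~\ref{thm:betterupperbound} once; you instead apply Theorem~\ref{thm:betterupperbound} directly to every nonempty $u$ and use the trivial monotonicity $\prod_{j\in u-\{j_m\}}b_j/(b_j-1)\le\prod_{j=2}^d b_j/(b_j-1)$. Your route is slightly more self-contained since it does not need Theorem~\ref{thm:supersetswin}, and it also handles $d=1$ uniformly without a separate check; the paper's route has the small advantage of identifying exactly where the supremum lives.
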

\begin{proof}
The result holds for $d=1$.
For $d\ge2$,
\begin{align*}
\sup_{n\ge1}\max_{u\subseteq1{:}d}\sup_{\bsk\in\natu_0^u}G_{u,\bsk}(n)
=\sup_{n\in\natu}G_{1{:}d,\bszero}(n)
\le \prod_{j=2}^d\frac{b_j}{b_j-1}
=\frac12\prod_{j=1}^d\frac{b_j}{b_j-1}
\end{align*}
with the inequality coming from Theorem~\ref{thm:betterupperbound}.
\end{proof}

\begin{theorem}\label{thm:ratebound} 
  For the scrambled Halton points
\begin{align}\label{eq:ratebound}
\Gamma_d=\max_{\emptyset\ne u\subseteq1{:}d}
\sup_{\bsk\in \natu_0^u}\sup_{n\in\natu}G_{u,\bsk}(n) = O( \log(d))
\end{align}
as $d\to\infty$.
 \end{theorem}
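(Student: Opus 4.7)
The plan is to derive the bound directly from Corollary~\ref{cor:halfprodratio}, which already reduces the question to showing
$$\prod_{j=1}^d \frac{b_j}{b_j-1} \;=\; \prod_{j=1}^d \Bigl(1-\tfrac{1}{b_j}\Bigr)^{-1} \;=\; O(\log d).$$
Since $b_1,\dots,b_d$ are exactly the primes up to $b_d$, the left-hand side is the classical Mertens product truncated at $x=b_d$.

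First I would invoke Mertens' third theorem, which states that as $x\to\infty$,
$$\prod_{p\le x}\Bigl(1-\tfrac{1}{p}\Bigr)^{-1} \sim e^{\gamma}\log x,$$
where the product runs over primes and $\gamma$ is the Euler--Mascheroni constant. Setting $x=b_d$ gives
$$\prod_{j=1}^d \Bigl(1-\tfrac{1}{b_j}\Bigr)^{-1} \sim e^{\gamma}\log b_d.$$
Next I would appeal to the prime number theorem in the form $b_d \sim d\log d$, so that $\log b_d \sim \log d$. Combining these,
$$\prod_{j=1}^d \frac{b_j}{b_j-1} \sim e^{\gamma}\log d,$$
and Corollary~\ref{cor:halfprodratio} then delivers $\Gamma_d \le \tfrac12 e^{\gamma}\log d\,(1+o(1)) = O(\log d)$, as desired.

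There is no real obstacle here; the work has already been done in Corollary~\ref{cor:halfprodratio}, which translates the combinatorial problem about gain coefficients into a product over the first $d$ primes. The rest is a direct quotation of two standard analytic number theory results (Mertens' third theorem for the product's asymptotics, and the prime number theorem for the size of $b_d$). If one wants to avoid citing asymptotic equivalence and produce an explicit constant, one could instead use Rosser--Schoenfeld-type effective bounds of the form $\prod_{p\le x}(1-1/p)^{-1} \le e^{\gamma}(\log x)(1 + c/\log^2 x)$ together with an effective lower bound $b_d \ge d\log d$ valid for $d\ge 6$, but the qualitative $O(\log d)$ conclusion already follows from the asymptotic statements above.
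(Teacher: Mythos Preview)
Your argument is correct. Both you and the paper start from Corollary~\ref{cor:halfprodratio}, so the task is indeed just to bound $\prod_{j=1}^d b_j/(b_j-1)$ by $O(\log d)$. From there the two proofs diverge.

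You invoke Mertens' third theorem and the prime number theorem as black boxes, which is the fastest route: the product is exactly the Mertens product at $x=b_d$, and $\log b_d\sim\log d$ finishes it. The paper instead gives an elementary argument that avoids both of those theorems. It uses only the Rosser--Schoenfeld bound $b_j>j\log j$, Taylor-expands $\log(b_j/(b_j-1))\approx 1/b_j$, and then bounds $\sum_j 1/(j\log j)$ by the integral $\int dx/(x\log x)=\log\log d+O(1)$; exponentiating gives the result. In effect the paper reproves a weak form of Mertens' theorem from scratch. Your proof is shorter and yields the sharper asymptotic constant $e^{\gamma}/2$ for the upper bound, at the cost of citing heavier analytic number theory; the paper's proof is more self-contained. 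Your final remark about Rosser--Schoenfeld effective bounds is precisely the path the paper takes.
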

\begin{proof}
First,
$\log(\Gamma_d) \le 
\sum_{j=1}^d\log(b_j/(b_j-1))$
where $b_j$ is the $j$'th prime number.
For any $j\ge1$ we have $b_j>\underline{b}_j=j\log(j)$ by equation (3.12) of \cite{ross:scho:1962}.
For any $\epsilon>0$, a Taylor expansion gives
$$
\log\Bigl(\frac{b_j}{b_j-1}\Bigr)
 < \log\Bigl(\frac1{1-1/\underline{b}_j}\Bigr)
<\frac{1}{\underline{b}_j}
+\frac{1+\epsilon}{2\underline{b}_j^2}
$$
for all $j\ge J_1=J_1(\epsilon)$ for some $J_1<\infty$.
Then for all large enough $d$, some $J_2=J_2(\epsilon)\ge J_1(\epsilon)$ and some constants $c_{\epsilon}<c'_{\epsilon}<\infty$
\begin{align*}
\log( \Gamma_d)
&<c_{\epsilon}+\int_{J_2-1}^d\frac{1}{x\log(x)}\rd x
+\int_{J_2-1}^d\frac{1+\epsilon}{
2(x\log(x))^2}\rd x\\
&<c'_{\epsilon}+\int_{J_2-1}^d
\frac1{x\log(x)}\rd x
\\
&= \log(\log(d))+O(1)
\end{align*}
as $d\to\infty$.
Exponentiating this relationship establishes equation~\eqref{eq:ratebound}.
\end{proof}

\section{A  lower bound}\label{sec:lower}

Here we show that the gains cannot
be $O(\log(d)^{1-\epsilon})$ for any $\epsilon>0$. First we get a bound for the gain factor of any set $u$ that includes either $j=1$ or $j=2$.  This is equivalently about whether either 2 or 3 are among the primes $b_j$ for $j\in u$.

\begin{theorem}\label{thm:alowerbound}
For $1\le d<\infty$ and $u\subseteq 1{:}d$, if
$u\cap\{1,2\}\ne\emptyset$ then
    $$\sup_{n\in\natu}G_{u,\bsk}(n)\geq \prod_{j\in u-\{j^*\}}\frac{b_j+1}{b_j}$$
    for any $\bsk\in\natu_0^u$
    where $j^*$ is any element of $u\cap\{1,2\}$.
\end{theorem}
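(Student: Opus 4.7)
The plan is to produce a single $n$ that achieves the claimed lower bound, so the supremum inequality follows for free. By Theorem~\ref{thm:onlykzero} it suffices to treat $\bsk=\bszero$, and I would take the candidate
$$
n^* = \prod_{j\in u-\{j^*\}} b_j,
$$
i.e.\ the product of all bases in $u$ except the one corresponding to $j^*$ (which is $2$ or $3$). The reason this is natural: $m^*_{v}:=m_{u,v,\bszero}=\prod_{j\in v}b_j$ divides $n^*$ whenever $j^*\notin v$, so all those ``nuisance'' terms in~\eqref{eq:thatotherapproach} drop out and we are left with a very clean sum over $v\ni j^*$.

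Starting from the form of $\widetilde G_{u,\bszero}(n)$ one gets by combining~\eqref{eq:GinHC} with~\eqref{eq:thatotherapproach} and cancelling the $n^2/m_{u,v,\bszero}$ pieces (just as in the proof of Proposition~\ref{prop:gainzero}), I would write
$$
\widetilde G_{u,\bszero}(n^*) \;=\; \sum_{v\subseteq u}(-1)^{|u-v|}(m^*_v)^2\,\err_v(1-\err_v),
$$
where $\err_v = n^*/m^*_v-\lfloor n^*/m^*_v\rfloor$. For $v\subseteq u-\{j^*\}$ we have $\err_v=0$. For $v=w\cup\{j^*\}$ with $w\subseteq u-\{j^*\}$, since $b_{j^*}$ is coprime to every other $b_j$,
$$
\frac{n^*}{m^*_v} \;=\; \frac{\prod_{j\in u-\{j^*\}-w}b_j}{b_{j^*}}
$$
has numerator coprime to $b_{j^*}$, so $\err_v = r_w/b_{j^*}$ for some $r_w\in\{1,\dots,b_{j^*}-1\}$. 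The crucial arithmetic fact is that $r(b_{j^*}-r)=b_{j^*}-1$ for \emph{every} $r\in\{1,\dots,b_{j^*}-1\}$ whenever $b_{j^*}\in\{2,3\}$. Consequently $\err_v(1-\err_v)=(b_{j^*}-1)/b_{j^*}^2$ regardless of $w$, which is exactly what makes the bases $2$ and $3$ special, and which is the main obstacle to extending this argument to $b_{j^*}\ge 5$.

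With that uniform value in hand, the sum over $v\ni j^*$ factors as
$$
\widetilde G_{u,\bszero}(n^*) \;=\; (b_{j^*}-1)\sum_{w\subseteq u-\{j^*\}}(-1)^{|u-\{j^*\}-w|}\prod_{j\in w}b_j^2
\;=\; (b_{j^*}-1)\prod_{j\in u-\{j^*\}}(b_j^2-1).
$$
Dividing by the normalizer $n^*\prod_{j\in u}(b_j-1)$ from~\eqref{eq:defgain} telescopes to
$$
G_{u,\bszero}(n^*) \;=\; \prod_{j\in u-\{j^*\}}\frac{b_j+1}{b_j},
$$
which is the claimed lower bound. The only delicate point is the little identity $r(b-r)=b-1$ for $b\in\{2,3\}$; everything else is bookkeeping with the inclusion–exclusion structure that has already been set up in Sections~\ref{sec:background}--\ref{sec:gaininequalities}.
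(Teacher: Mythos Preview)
Your proposal is correct and follows essentially the same route as the paper: reduce to $\bsk=\bszero$ via Theorem~\ref{thm:onlykzero}, evaluate at $n^*=\prod_{j\in u-\{j^*\}}b_j$, use~\eqref{eq:Gexpr} so that only the subsets $v\ni j^*$ survive, and exploit that $K(r/b_{j^*})=(b_{j^*}-1)/b_{j^*}^2$ for every nonzero residue $r$ when $b_{j^*}\in\{2,3\}$. The algebra and the resulting exact value $G_{u,\bszero}(n^*)=\prod_{j\in u-\{j^*\}}(b_j+1)/b_j$ match the paper's computation line for line.
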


\begin{proof}[Proof of Theorem~\ref{thm:alowerbound}]
According to Theorem~\ref{thm:onlykzero}, it suffices to prove the inequality for $\bsk=\bszero$. For $j^*\in u\cap\{1,2\}$, let $b_*=b_{j^*}$, $n^*=\prod_{j\in u,\bj\neq b_*} \bj$ and $V=\{v\subseteq u\mid j^*\in v\}$. Because $m_{u,v,\bszero}$ divides $n^*$ for any $v\notin V$, $\err_v=n^*/m_{u,v,\bszero}-\lfloor n^*/m_{u,v,\bszero}\rfloor=0$. Then equation~\eqref{eq:Gexpr} simplifies to
    \begin{align*}
        \widetilde G_{u,\bszero}(n^*)&= \sum_{v\in V}H_{u,v}m_{u,v,\bszero}K\biggl(\frac{n^*}{m_{u,v,\bszero}}-\Bigl\lfloor\frac{n^*}{m_{u,v,\bszero}}\Bigr\rfloor\biggr)\\
        &=\sum_{v\in V}(-1)^{|u-v|}\biggl(\prod_{j\in v}\bj^2 \biggr)K\biggl(\frac{1}{b_*}\prod_{j\in u-v}b_j-\Bigl\lfloor\frac{1}{b_*}\prod_{j\in u-v}b_j\Bigr\rfloor\biggr)
    \end{align*}
    where $K(x)=x(1-x)$.

    When $b_*=2$, because $\prod_{j\in u-v}b_j$ is odd, 
    $$K\biggl(\frac{1}{b_*}\prod_{j\in u-v}b_j-\Bigl\lfloor\frac{1}{b_*}\prod_{j\in u-v}b_j\Bigr\rfloor\biggr)=K\Bigl(\frac{1}{2}\Bigr)=\frac{1}{4}.$$
    When $b_*=3$, because $\prod_{j\in u-v}b_j$ an integer not divisible by 3, 
    $$K\biggl(\frac{1}{b_*}\prod_{j\in u-v}b_j-\Bigl\lfloor\frac{1}{b_*}\prod_{j\in u-v}b_j\Bigr\rfloor\biggr)=K\Bigl(\frac{1}{3}\Bigr) =K\Bigl(\frac{2}{3}\Bigr)=\frac{2}{9}.$$
    In either case,
    $$K\biggl(\frac{1}{b_*}\prod_{j\in u-v}b_j-\Bigl\lfloor\frac{1}{b_*}\prod_{j\in u-v}b_j\Bigr\rfloor\biggr)=\frac{b_*-1}{b_*^2}$$
    and the normalized coefficient equals
     \begin{align*}
        G_{u,\bszero}(n^*)
        &=\frac{1}{n^*}\prod_{j\in u}(\bj-1)^{-1}\sum_{v\in V}(-1)^{|u-v|}\biggl(\prod_{j\in v}\bj^2\biggr) \frac{b_*-1}{b_*^2}\\
        &=\prod_{j\in u,\bj\neq b_*}\frac{1}{\bj(\bj-1)}\sum_{v\in V}(-1)^{|u-v|}\prod_{j\in v,\bj\neq b_*}b_j^2 \\
        &=\prod_{j\in u,\bj\neq b_*}\frac{1}{\bj(\bj-1)}\prod_{j\in u,\bj\neq b_*}(b_j^2-1) \\
        &=\prod_{j\in u,\bj\neq b_*}\frac{\bj+1}{\bj}.
    \end{align*}   
    Hence 
        $$\sup_{n\in\natu}G_{u,\bszero}(n)\geq G_{u,\bszero}(n^*)=\prod_{j\in u,\bj\neq b^*}\frac{b_j+1}{b_j}.\qedhere$$
\end{proof}

For $d\ge2$ we divide $\prod_{j=1}^d(b_j+1)/b_j$ by either $3/2$ or $4/3$ and still get a lower bound. It follows that
$$
\Gamma_d \ge \frac34\prod_{j=1}^d\frac{b_j+1}{b_j}
$$
for $j\ge2$, while $\Gamma_1=1$.

\begin{corollary}\label{cor:betterlowerbound}
For any $\epsilon>0$ 
$$\Gamma_{1:d}=\sup_{n\ge1}\max_{u\subseteq1{:}d}\sup_{\bsk\in\natu_0^u}G_{u,\bsk}(n)$$
cannot be $O((\log d)^{1-\epsilon})$. 
\end{corollary}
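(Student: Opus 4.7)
The plan is to start from the explicit lower bound displayed immediately before the corollary,
\[
\Gamma_d \ge \frac{3}{4}\prod_{j=1}^d \frac{b_j+1}{b_j},
\]
valid for all $d \ge 2$, and argue that the product on the right already grows faster than any $(\log d)^{1-\epsilon}$.

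First I would take logarithms,
\[
\log \Gamma_d \ge \log(3/4) + \sum_{j=1}^d \log\bigl(1+1/b_j\bigr).
\]
The elementary inequality $\log(1+x) \ge x - x^2/2$ for $x\ge 0$, combined with the convergence of $\sum_{j\ge 1} 1/b_j^2 \le \sum_{p\text{ prime}} 1/p^2 < \infty$, reduces the task to a lower bound on $\sum_{j=1}^d 1/b_j$, namely the sum of reciprocals of the first $d$ primes.

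For that I would invoke Mertens' second theorem, $\sum_{p\le x} 1/p = \log\log x + M + o(1)$ as $x \to \infty$, with $M$ the Meissel--Mertens constant. Since $b_d \le C d\log d$ for large $d$ by the Rosser--Schoenfeld estimate already cited in the proof of Theorem~\ref{thm:ratebound}, substituting $x=b_d$ gives
\[
\sum_{j=1}^d \frac{1}{b_j} \;=\; \log\log b_d + O(1) \;=\; \log\log d + O(1),
\]
using $\log b_d = \log d + \log\log d + O(1)$ at the last step. Combining the three displays yields $\log\Gamma_d \ge \log\log d - K$ for some absolute constant $K$ and all sufficiently large $d$, hence $\Gamma_d \ge e^{-K}\log d$ eventually. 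For any fixed $\epsilon>0$ this forces $\Gamma_d/(\log d)^{1-\epsilon}\ge e^{-K}(\log d)^\epsilon \to \infty$, ruling out $\Gamma_d = O((\log d)^{1-\epsilon})$.

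The only substantive step is the $\log\log d$ asymptotic for the sum of prime reciprocals, so if invoking Mertens feels too heavy one can stay self-contained: combining the Rosser--Schoenfeld bound $b_j \le j(\log j + \log\log j)$ for $j\ge 6$ with the integral comparison $\sum_{j=J}^{d} 1/(j\log j) \ge \int_J^{d+1} dx/(x\log x) = \log\log(d+1) - \log\log J$ yields $\sum_{j=1}^d 1/b_j \ge (1-o(1))\log\log d$, which is equally sufficient and uses only tools already present in the paper.
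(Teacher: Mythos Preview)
Your proof is correct. Your self-contained alternative at the end is essentially the paper's own argument: bound $b_j$ above by $j(\log j+\log\log j)$ via Rosser--Schoenfeld, convert this into a lower bound of order $(1-\epsilon'')/(j\log j)$ for $\log((b_j+1)/b_j)$, and sum via the integral $\int dx/(x\log x)=\log\log x$ to get $\log\Gamma_d\ge c+(1-\epsilon'')\log\log d$. Your primary route through Mertens' second theorem is a genuinely different packaging of the same analytic content; it buys you the sharper conclusion $\Gamma_d\ge e^{-K}\log d$ (matching the upper bound up to constants), at the cost of importing a named theorem the paper does not otherwise invoke. One small correction: the Rosser--Schoenfeld estimate actually used in the proof of Theorem~\ref{thm:ratebound} is the \emph{lower} bound $b_j>j\log j$, whereas here you need the companion \emph{upper} bound $b_j<j(\log j+\log\log j)$ for $j\ge6$; both are in the same reference, but it is not quite ``already cited.''
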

\begin{proof}
First $1{:}d\cap\{1,2\}\ne\emptyset$, so Theorem~\ref{thm:alowerbound} gives
$\Gamma_{1:d}\ge \prod_{j=2}^d(b_j+1)/b_j$
(which is $1$ for $d=1$).
As in the proof of Theorem~\ref{thm:ratebound} we note that if $j\ge6$ then $b_j<j\log(j)+j\log(\log(j))$.  Then for $0<\epsilon'<\epsilon''<\epsilon$ and large enough $j$
$$
\log\Bigl(\frac{b_j+1}{b_j}\Bigr)\ge \frac{1-\epsilon'}{j\log(j)+j\log(\log(j))}
\ge \frac{1-\epsilon''}{j\log(j)}.
$$
 Using an integral lower bound like the one in the proof of Theorem~\ref{thm:alowerbound} we get
\begin{align*}
\log(\Gamma_{1:d})
\ge c + (1-\epsilon'')\log(\log(d))
\end{align*}
for some $c\in\real$.
After exponentiating, $\Gamma_{1:d}$ cannot be $O((\log d)^{1-\epsilon})$.
\end{proof}

\section{Conclusions}\label{sec:conclusions}

When we score RQMC methods by their worst case
variance relative to plain MC, then we find that
scrambled Halton points attain a much better
bound than scrambled Sobol' points do, while
retaining the $o(1/n)$ variance property.
This does not imply that scrambled Halton points
will be generally better than scrambled Sobol' points
in applications, because the integrands of interest
may not be ones where scrambled Sobol' points
perform poorly.   
It does make scrambled Halton points a useful approach
for settings where never performing much worse than
Monte Carlo is a priority.  We note that we could obtain a gain uniformly bounded in $d$ if we were to slightly increase the values $b_j$ in use. We do not recommend this as it would be detrimental to the equidistribution properties that QMC and RQMC are designed to produce.

\section*{Acknowledgments}
We thank Nabil Kahale who asked about methods with better
gain bounds than scrambled Sobol' points at MCM
2023, as well as C.\ D.\ Parada who raised the same
question in an email.
This work was supported by the National Science Foundation
under grant DMS-2152780.
\bibliographystyle{plain}
\bibliography{qmc}

\end{document}